\documentclass[leqno,12pt]{amsart}
\usepackage{amsmath,amsfonts,amsthm,mathrsfs,amssymb}
\usepackage[all]{xy}
\usepackage{enumerate}
\usepackage{graphicx,epsfig}
\usepackage{hyperref}
\usepackage{mathtools}
\usepackage{bm}
\usepackage{tikz}
\usepackage{pgf}
\usepackage{epsfig}

\usetikzlibrary{matrix}

\oddsidemargin 0pt
\evensidemargin 0pt
\marginparwidth 40pt
\marginparsep 10pt
\topmargin -20pt
\headsep 10pt
\textheight 8.7in
\textwidth 6in
\linespread{1.2}

\newtheorem{theorem}{Theorem}[section]
\newtheorem*{proposition*}{Proposition}
\newtheorem{proposition}[theorem]{Proposition}
\newtheorem{corollary}[theorem]{Corollary}
\newtheorem{lemma}[theorem]{Lemma}
\newtheorem{example}[theorem]{Example}
\newtheorem{remark}[theorem]{Remark}
\newtheorem{definition}[theorem]{Definition}

\newcommand\Cc{\mathcal C}
\DeclareMathOperator{\Supp}{Supp}
\DeclareMathOperator{\Proj}{Proj}
\DeclareMathOperator{\reg}{reg}
\DeclareMathOperator{\Tor}{Tor}
\DeclareMathOperator{\Hom}{Hom}
\DeclareMathOperator{\ann}{ann}
\DeclareMathOperator{\tdeg}{tdeg}

\DeclareMathOperator{\indeg}{indeg}
\title{Multigraded  Tor and local cohomology}

\author{Marc Chardin}
\address{Institut de Mathématiques de Jussieu, CNRS \& Sorbonne Université, France}
\email{marc.chardin@imj-prg.fr, Marc.CHARDIN@cnrs.fr}

\author{Rafael Holanda}
\address{Departamento de Matemática, Universidade Federal da Paraíba - 58051-900, João Pessoa, PB, Brazil}
\email{rfh@academico.ufpb.br, rf.holanda@gmail.com}



\keywords{Graded free resolutions, 
Local cohomology, Multigraded regularity}
\subjclass[2020]{Primary 13D02, 13D07, 13D45 ; Secondary 14B15}


\begin{document}

\maketitle


\begin{abstract}
Notions of Castelnuovo-Mumford regularity and of $a^*$ invariant were extended from standard graded algebras to the toric setting. 

We here focus our attention on the standard multigraded case, which corresponds to a product of $k$ projective spaces. A natural notion for a $\mathbb Z^k$-graded module is its support : degrees in which it is not zero. A stabilized version of it is adding $-\mathbb N^k$, in order for the complement (vanishing region) to be stable by addition of $\mathbb N^k$. 

Cohomology of twists of a sheaf on a product of projective spaces, provided by a graded module, are given by local cohomologies with respect the product $B$ of the ideals $B_i$ generated by the $k$ sets of variables. 


Our results shed some light on a central issue, the relation between shifts in graded free resolution and cohomology vanishing : it shows that stabilized support of cohomology with respect to $B$ corresponds to the union of stabilized supports for cohomologies in the $B_i$'s, while shifts in (some of the) graded free resolutions are inside the intersection of these stabilized supports. A one-to-one correspondence between stabilized supports of Tor modules and of local cohomologies with respect to the sum of the $B_i$'s is also established.

We then derive a consequence on linear resolutions for truncations of a graded module. 
\end{abstract}

\section{Introduction}

Let $S$ be a commutative ring and $R$ a finitely generated standard $\mathbb Z ^k$-graded polynomial extension. In other words $R$ has $k\geq 1$ finite sets of variables, the $i$-th set having degree the $i$-th canonical generator of $\mathbb Z^k$. 

A graded $R$-module $M$ determines a quasi-coherent sheaf $\mathcal F$ on the scheme $\Proj (R)$ which is the product (over the spectrum of $S$) of projective spaces (again over the spectrum of $S$) corresponding to the $k$ sets of variables. Any quasi-coherent sheaf arises this way similarly as in the case of a single projective space. Sheaf cohomology of twists of $\mathcal F$ could be given in terms of graded components of the local cohomology of $M$ with respect to the product $B$ of the ideals $B_1,\ldots ,B_k$ generated by the $k$ sets of variables.

In the case $k=1$, the notion of Castelnuovo-Mumford regularity provides a key relation between shifts in graded free resolutions of $M$ and the degrees where local (or sheaf) cohomologies vanish. This notion could also be defined by considering truncations of the module (keeping only elements of degree at least some given integer) and asking for generation in a single degree and maximal shifts at each step in a graded resolution increasing at most by one. 

In a series of works, first motivated by toric geometry (see \cite{MS}), the notion of Castelnuovo-Mumford regularity was extended and studied for the multigraded setting, in general requiring $S$ to be a field and $M$ to be finitely generated. Although the properties do not extend perfectly, pretty sharp results have been obtained and provide a more and more precise picture of the situation; in the standard multigraded setting, see for instance \cite{BCS} for a recent refinement and references to previous works.

Besides the notion of Castelnuovo-Mumford regularity, a variation of this notion sometimes called the $a^*$-invariant could as well be defined both in terms of shifts in graded free resolutions or in terms of local cohomology. It is bounded above by the regularity and the difference is at most the number of variables; it gives a sharper estimate of the regularity of the Hilbert function, but do not share all the nice features of regularity. 

In this text, we consider the natural extension of the notion of $a^*$-invariant by considering the following notion already present in the work of Huy T\`ai H\`a \cite{Ha}:
$$
\mathbb C_B (M)^*:=\cup_i \Supp_{\mathbb Z^k}(H^i_B(M))^*
$$
where the support of a graded module is the set of degrees for which the corresponding component is not $0$. The star of a set $E\subseteq \mathbb Z^k$, $E^*:=\{ e-\mu\ \vert\ \mu \in \mathbb N^k\}$, is the smallest set containing $E$ such that its complement is stable under the addition of any element in $\mathbb N ^k$ (see Lemma \ref{stabcomp}).

In order to understand cohomology with support in $B$, which could be hard to determine even in very simple examples (Remark \ref{example} presents one such case), one available tool is a spectral sequence (see \cite{Lyu} or our construction in \ref{marcss})
$$
\bigoplus_{1\leq i_1<\cdots <i_p\leq k}H^{p-q-1}_{B_{i_1}+\cdots +B_{i_p}}(M)\Rightarrow H^q_B (M).
$$
It shows that $\mathbb C_{B}(M)\subseteq\bigcup_{i_1,\dots ,i_p} \mathbb C_{B_{i_1}+\cdots +B_{i_p}}(M)$. The cohomology with respect to the sum of some $B_j$'s is much easier to analyze, as it is cohomology with support in ideals generated by variables. 

A first surprise  was that in terms of supports, and after stabilization of the complement by the star operation, the union of the supports on the right equals $\mathbb C_B (M)^*$. More precisely, defining $\mathbb C_I (M)$ similarly as above for any graded ideal $I$:

\begin{theorem}{\rm(Theorem \ref{cohB})}
Let $M$ be a  graded $R$-module, then
$$
\bigcup_i \mathbb C_{B_{i}}(M)^*= \bigcup_{i_1,\dots ,i_p} \mathbb C_{B_{i_1}+\cdots +B_{i_p}}(M)^*=\mathbb C_{B}(M)^*.
$$
\end{theorem}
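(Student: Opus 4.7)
The inclusions $\bigcup_i \mathbb C_{B_i}(M)^* \subseteq \bigcup_{I}\mathbb C_{B_I}(M)^*$ (writing $B_I := \sum_{i\in I}B_i$ for a nonempty $I\subseteq\{1,\dots,k\}$ and taking $|I|=1$) and $\mathbb C_B(M)^* \subseteq \bigcup_I \mathbb C_{B_I}(M)^*$ (from the Mayer--Vietoris spectral sequence recalled just above) come for free. My plan is to establish the two reverse containments $\bigcup_I \mathbb C_{B_I}(M)^* \subseteq \bigcup_i \mathbb C_{B_i}(M)^*$ and $\bigcup_i \mathbb C_{B_i}(M)^* \subseteq \mathbb C_B(M)^*$; both will rely on a common technical ingredient together with Grothendieck's composition-of-functors spectral sequence.

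The ingredient is the following \textbf{Key Lemma}: for every homogeneous ideal $J\subseteq R$ and every $\mathbb Z^k$-graded $R$-module $N$,
$$\Supp_{\mathbb Z^k}\bigl(H^j_J(N)\bigr)^*\ \subseteq\ \Supp_{\mathbb Z^k}(N)^*\qquad(j\geq 0).$$
This is immediate from the \v{C}ech computation of $H^j_J(N)$ on any homogeneous generating set $f_1,\dots,f_r$ of $J$: every term $N[(f_{\alpha_1}\cdots f_{\alpha_s})^{-1}]$ of the \v{C}ech complex has support inside $\Supp(N)-\mathbb N^k$ (inverting elements of non-negative degree only shifts supports in the $-\mathbb N^k$ direction), and taking kernels and cokernels of the \v{C}ech differentials preserves this containment.

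To prove $\mathbb C_{B_i}(M)^*\subseteq \mathbb C_B(M)^*$, fix $i$. Since $B\subseteq B_i$ gives $\Gamma_{B_i}\subseteq\Gamma_B$ and hence the identity $\Gamma_{B_i}\circ\Gamma_B=\Gamma_{B_i}$, Grothendieck's spectral sequence reads
$$E_2^{p,q} = H^p_{B_i}\bigl(H^q_B(M)\bigr)\ \Longrightarrow\ H^{p+q}_{B_i}(M).$$
Applying the Key Lemma with $N=H^q_B(M)$ gives $\Supp(E_2^{p,q})^*\subseteq \Supp(H^q_B(M))^*\subseteq \mathbb C_B(M)^*$, and this inclusion survives passage to subquotients through the pages and the filtration on the abutment; hence $\mathbb C_{B_i}(M)\subseteq \mathbb C_B(M)^*$, so $\mathbb C_{B_i}(M)^*\subseteq \mathbb C_B(M)^*$. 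The same device handles $\mathbb C_{B_I}(M)^*\subseteq \bigcup_i \mathbb C_{B_i}(M)^*$ for $|I|\geq 2$: for any $i_0\in I$ the pigeonhole identity of torsion submodules $\Gamma_{B_I}(N)=\bigcap_{l\in I}\Gamma_{B_l}(N)$ yields $\Gamma_{B_I}=\Gamma_{B_{I\setminus\{i_0\}}}\circ\Gamma_{B_{i_0}}$, whence
$$H^p_{B_{I\setminus\{i_0\}}}\bigl(H^q_{B_{i_0}}(M)\bigr)\ \Longrightarrow\ H^{p+q}_{B_I}(M),$$
and the Key Lemma forces $\mathbb C_{B_I}(M)^*\subseteq \mathbb C_{B_{i_0}}(M)^*$. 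Since $i_0\in I$ was arbitrary, $\mathbb C_{B_I}(M)^*\subseteq \bigcap_{i_0\in I}\mathbb C_{B_{i_0}}(M)^*\subseteq\bigcup_i\mathbb C_{B_i}(M)^*$.

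The main subtlety I foresee is the clean setup of the Grothendieck spectral sequence: one needs $\Gamma_B$ (respectively $\Gamma_{B_{i_0}}$) to send graded injectives to $\Gamma_{B_i}$-acyclics (resp.\ $\Gamma_{B_{I\setminus\{i_0\}}}$-acyclics), which is standard when $R$ is Noetherian by preservation of injectivity under local cohomology. If the authors require greater generality, one can replace Grothendieck's spectral sequence by the hypercohomology spectral sequence of the bicomplex built from the two relevant \v{C}ech complexes; the degree-bookkeeping supplied by the Key Lemma is unaffected, and the conclusion is the same.
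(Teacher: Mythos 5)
Your argument is correct, but it travels a different road than the paper does.

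The paper proves the two nontrivial containments via its Tor--cohomology correspondence: Corollary \ref{incCoh} (itself derived from Proposition \ref{TorinCB}, a \v{C}ech--Koszul bicomplex estimate on $\Supp\Tor^R_j(M,R/B_{i_1}+\cdots+B_{i_p})$, combined with Theorem \ref{C=T}) yields $\mathbb C_{B_{i_1}+\cdots +B_{i_p}}(M)\subseteq \mathbb C_I(M)-\sum_j e_{i_j}\mathbb N$ for any finitely generated graded $I\subseteq B_{i_1}+\cdots+B_{i_p}$; taking $I=B_{i_1}$ gives the collapse to $\bigcup_i \mathbb C_{B_i}(M)^*$, and taking $I=B$ gives the containment in $\mathbb C_B(M)^*$. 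The reverse direction, as in your write-up, is the Mayer--Vietoris spectral sequence of Corollary \ref{marcss}. You instead work entirely on the local cohomology side: your Key Lemma ($\Supp (H^j_J(N))^*\subseteq\Supp (N)^*$, which follows because all homogeneous elements of $R$ have degree in $\mathbb N^k$ so localization only moves supports in the $-\mathbb N^k$ direction) is fed into composite-functor spectral sequences $H^p_{B_i}(H^q_B(M))\Rightarrow H^{p+q}_{B_i}(M)$ and $H^p_{B_{I\setminus\{i_0\}}}(H^q_{B_{i_0}}(M))\Rightarrow H^{p+q}_{B_I}(M)$. The identities $\Gamma_{B_i}\circ\Gamma_B=\Gamma_{B_i}$ and $\Gamma_{B_{I\setminus\{i_0\}}}\circ\Gamma_{B_{i_0}}=\Gamma_{B_I}$ are correct, and your remark about replacing Grothendieck's spectral sequence by the two spectral sequences of the bicomplex $\check\Cc^\bullet_{\mathbf f}\otimes_R\check\Cc^\bullet_{\mathbf g}(M)$ is exactly the right fix in the non-Noetherian generality the paper allows --- the total complex is the \v{C}ech complex for the concatenated tuple, whose radical in both of your applications is the target ideal ($B_i$ resp.\ $B_I$), so the abutment is as you claim. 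Your route is somewhat more self-contained for this theorem (it bypasses the machinery of Theorems \ref{basicincl} and \ref{C=T} and Proposition \ref{TorinCB}) and, as you note, it actually delivers the sharper $\mathbb C_{B_I}(M)^*\subseteq\bigcap_{i_0\in I}\mathbb C_{B_{i_0}}(M)^*$ as a byproduct; the paper's route earns its keep elsewhere, since the Tor--cohomology dictionary is the paper's main object of study and is reused for the truncation results.
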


And also, explaining and completing the left equality:
$$
\mathbb C_{B_{i_1}+\dots +B_{i_p}}(M)^*\subseteq \mathbb C_{B_{i_1}}(M)^*\cap \cdots \cap \mathbb C_{B_{i_p}}(M)^*.
$$

Hence $\mathbb C_{B}(M)^*$ is fully determined by the supports of the cohomology with respect to the $B_i$'s. This should help to determine $\mathbb C_{B}(M)^*$ much more easily.

The second important, although less surprising, fact is that the sets $\mathbb C_{B_{i_1}+\dots +B_{i_p}}(M)^*$ are fully determined by the support of corresponding Tor modules, and vice-versa, as detailed in Theorem \ref{C=T}. This follows from the key case of $ \mathfrak{m}:=B_1+\cdots +B_k$:

\begin{theorem}{\rm(Theorem \ref{basicincl})}
Let $M$ be a  graded $R$-module, then
$$
\bigcup_i \Supp_{\mathbb{Z}^k} (\Tor_i^R(M,S))^*=(n_1,\ldots ,n_k)+\mathbb C_\mathfrak{m}(M)^*.
$$
\end{theorem}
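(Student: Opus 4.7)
The plan is to establish the two opposite inclusions by complementary techniques.

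For the inclusion $(n_1,\ldots,n_k)+\mathbb{C}_{\mathfrak{m}}(M)^* \subseteq \bigcup_i \Supp_{\mathbb Z^k}(\Tor_i^R(M,S))^*$, I would pick a minimal $\mathbb Z^k$-graded free resolution $F_\bullet\to M$, with $F_p=\bigoplus_l R(-\mathbf{a}_{p,l})$; minimality identifies $\Supp_{\mathbb Z^k}(\Tor_p^R(M,S))$ with the set of shifts $\{\mathbf{a}_{p,l}\}_l$. Since $\mathfrak{m}$ is finitely generated, $H^*_{\mathfrak{m}}$ commutes with arbitrary direct sums, so $H^j_{\mathfrak{m}}(F_p)=0$ for $j\ne n:=n_1+\cdots+n_k$ and $H^n_{\mathfrak{m}}(F_p)=\bigoplus_l E(-\mathbf{a}_{p,l})$ with $E:=H^n_{\mathfrak{m}}(R)$; a direct Čech computation gives $\Supp(E)=-(n_1,\ldots,n_k)-\mathbb N^k$. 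The hyperhomology spectral sequence for $R\Gamma_{\mathfrak m}$ applied to $F_\bullet$ collapses to a single nonzero row, yielding $H^j_{\mathfrak{m}}(M)\cong H_{n-j}(H^n_{\mathfrak{m}}(F_\bullet))$, so $\Supp(H^j_{\mathfrak{m}}(M))\subseteq \bigcup_l(\mathbf{a}_{n-j,l}-(n_1,\ldots,n_k)-\mathbb N^k)$. Unioning over $j$ and using minimality produces the claimed inclusion.

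For the reverse inclusion I would work with the double complex
\[
M\otimes_R K_\bullet(\mathbf{x};R)\otimes_R \check{C}^\bullet_{\mathfrak{m}},
\]
where $K_\bullet$ is the Koszul complex on all variables (a free resolution of $S$) and $\check{C}^\bullet_{\mathfrak{m}}$ is the Čech complex on the same variables. Filtering one way collapses at the $E_2$-page: each $\Tor_p^R(M,S)$ is $\mathfrak{m}$-torsion, so its higher local cohomologies vanish, and the abutment is identified (in the appropriate total degree) with $\Tor_{p-q}^R(M,S)$. Filtering the other way—using that each $K_p$ is $R$-free so that $H^q_{\mathfrak m}(M\otimes K_p)=H^q_{\mathfrak m}(M)\otimes K_p$—produces the convergent spectral sequence
\[
E_2^{p,q}=\Tor_p^R(H^q_{\mathfrak{m}}(M),S)\ \Longrightarrow\ \Tor_{p-q}^R(M,S).
\]
Convergence yields $\Supp(\Tor_i^R(M,S))\subseteq \bigcup_{p-q=i}\Supp(\Tor_p^R(H^q_{\mathfrak{m}}(M),S))$, and each term on the right is, via the Koszul complex on $H^q_{\mathfrak{m}}(M)$, contained in $\bigcup_{|I|=p}(\Supp(H^q_{\mathfrak{m}}(M))+\mathbf{e}_I)$, where $\mathbf{e}_I=\sum_{j\in I}\deg(x_j)\le (n_1,\ldots,n_k)$ componentwise. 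After passing to stars, all the $\mathbf{e}_I$ get absorbed into the maximum $(n_1,\ldots,n_k)$, yielding the claimed containment.

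The principal obstacle I anticipate is the careful bookkeeping for the two spectral sequences and their $\mathbb Z^k$-gradings; convergence itself is automatic since both $K_\bullet$ and $\check{C}^\bullet_{\mathfrak{m}}$ have length $n$, so the double complex is bounded. A secondary point is the appeal to minimality of the free resolution to identify the shift set with $\Supp(\Tor_p^R(M,S))$, which is standard in the graded-local setting. Everything else reduces to the single Čech computation $\Supp(H^n_{\mathfrak{m}}(R))=-(n_1,\ldots,n_k)-\mathbb N^k$ and the combinatorial observation that $\mathbf{e}_I$ is maximized componentwise at $I=\{1,\ldots,n\}$, where it equals $(n_1,\ldots,n_k)$.
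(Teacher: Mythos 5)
Your two inclusions are proved in essentially the same spirit as the paper: both directions use spectral sequences coming from the \v{C}ech--Koszul double complex and from the \v{C}ech complex applied to a graded free resolution, and your degree bookkeeping on the Koszul shifts absorbed into $(n_1,\ldots,n_k)$ after stabilization is precisely the paper's observation that $K_i(\mathbf{X};H^j_\mathfrak{m}(M))_{\mu-a}$ is a sum of copies of $H^j_\mathfrak{m}(M)$ in degrees $\mu+\delta$, $\delta\in\mathbb N^k$. (The paper works on the $E_1$ page directly, you pass to the $E_2$ page $\Tor_p^R(H^q_\mathfrak{m}(M),S)$; the content is the same.)

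The genuine gap is that your argument only covers modules for which a minimal (or at least a ``small'') free resolution exists. You invoke a minimal graded free resolution and use it to identify the shift set with $\Supp(\Tor_p^R(M,S))$. That requires either $S$ Noetherian and $M$ finitely generated, or at minimum that $M$ is bounded below in total degree so that one can choose a resolution as in Lemma \ref{TorFR}. The theorem is stated for an arbitrary graded $R$-module with no finiteness or boundedness hypothesis, and the paper's proof devotes a nontrivial second half to this general case: it introduces the submodules $M[u]:=\{m\in M:\ \tdeg(m)\geq u\}$, observes that $\mathbb C_\mathfrak m(M)$ and $\hat{\mathbb T}(M)$ agree with those of $M[u]$ in total degree $\geq u$, and recovers the equality of stabilized sets by intersecting over $u$ against the ``tails'' $E_u=\{t:\ |t|<u\}$. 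Without this reduction, your first inclusion is only established for bounded-below modules. (Your reverse inclusion is fine without any hypothesis, since the \v{C}ech--Koszul double complex is bounded in both directions.) A smaller point: minimality is also stronger than needed for the direction you use it in -- you only need the shifts of $F_p$ to lie inside $\mathbb T(M)$, which is what Lemma \ref{TorFR} guarantees under the boundedness hypothesis, so you could avoid minimality altogether; but you still need that hypothesis, and the general case still needs the truncation argument.
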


These two results give a quite precise picture of the difference between the support of Tor modules, that reflects shifts in free resolutions (see Lemma \ref{TorFR} and following comments), and  cohomology with respect to $B$, a central question in many recent works on multigraded regularity : 
$$
\bigcup_i \Supp_{\mathbb{Z}^k} (\Tor_i^R(M,S))^*\subseteq (n_1,\ldots ,n_k)+\bigcap _i C_{B_{i}}(M)^*\ \hbox{ while } \mathbb C_{B}(M)^*=\bigcup_i \mathbb C_{B_{i}}(M)^*.
$$

This also supports the idea, already developed in a work of the first named author with Nicolas Botbol \cite{BC} : one should not restrict the attention to only the ideal $B$, but consider as well cohomologies supported on other graded ideals that are naturally linked to the study.

In our text, we do not ask the ring to satisfy any property (besides commutativity) nor finiteness hypotheses for the module, in order to give as much flexibility in the use as it could be. However, it is of course crucial to have in mind that the main tool to be able to assert that $\mathbb C_{B}(M)^*\not= \mathbb Z^k$, or to show that the modules $\Tor_i^R(M,S)$ have finite support,  is to require $S$ to be Noetherian and $M$ to be finitely generated. 

Whenever $\mu\not\in \mathbb C_{B}(M)^*$, we show that, in degrees slightly bigger then $\mu$, the truncation of the module is generated in a single degree and shifts have total degree increasing at most by one at each step (hence the truncated module has a linear resolution whenever the base ring is a field). We state this result in terms of the regularity in $R$ for the total degree grading (a $\mathbb Z^k$-graded module is also graded for the $\mathbb Z$-grading given by total degree):


\begin{proposition}{\rm(Proposition \ref{linrestrunc})}
Let $M$ be a graded  $R$-module and $\mu\not\in \mathbb C_B(M)^*$. 

Then for any $t\in \mu +(n_1-1,\ldots ,n_k-1)+\mathbb N^k$, $M_{t +\mathbb N^k}$ has a regularity $\vert t\vert$.
\end{proposition}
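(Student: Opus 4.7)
The plan is to prove that $N := M_{t + \mathbb{N}^k}$ has regularity $|t|$ in the total-degree $\mathbb{Z}$-grading by establishing two ingredients: (a) $N$ is generated by $N_t$, which lives in total degree $|t|$, and (b) $\Tor^R_i(N, S)_\nu = 0$ whenever $|\nu| > |t| + i$, so that the shifts of a minimal free resolution are bounded by $|t| + i$ at step $i$. The strategy is to transfer vanishings from $M$ to $N$ through the short exact sequence $0 \to N \to M \to Q \to 0$ with $Q := M/N$.

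First I would unpack the hypothesis. By Theorem \ref{cohB}, $\mu \notin \mathbb{C}_B(M)^*$ forces $\mu \notin \mathbb{C}_{B_j}(M)^*$ for every $j$ and $\mu \notin \mathbb{C}_\mathfrak{m}(M)^*$, so $H^i_{B_j}(M)_\nu = H^i_\mathfrak{m}(M)_\nu = 0$ for every $\nu \geq \mu$ and every $i$. Theorem \ref{basicincl} then yields $\Tor^R_i(M,S)_\nu = 0$ for every $\nu \geq \mu + (n_1, \ldots, n_k)$, in particular for $\nu \geq t + (1, \ldots, 1)$. For (a), the directional vanishing $H^i_{B_j}(M)_\nu = 0$ for $\nu \geq \mu$, via a Koszul argument along the $n_j$ variables of $B_j$, gives surjectivity of the multiplication $R_{e_j} \otimes_S M_\nu \to M_{\nu + e_j}$ once $\nu \geq \mu + (n_j - 1)e_j$; iterating in each direction, $N$ is generated by $N_t$.

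For (b), observe that $Q$ is $B$-torsion: every element of $B$ has multidegree $\geq (1, \ldots, 1)$, so $B^r Q_\nu \subseteq Q_{\nu + r(1,\ldots,1)} = 0$ for $r$ large. Computing $\Tor^R_\bullet(Q, S)$ by the Koszul complex on all the variables of $R$, each summand $Q_{\nu - \deg I}$ vanishes once $\nu \geq t + (n_1, \ldots, n_k)$ because each coordinate of $\deg I$ is at most $n_j$. Hence $\Tor^R_i(Q, S)_\nu = 0$ on this cone, and combined with the Tor vanishing for $M$, the long exact Tor sequence yields $\Tor^R_i(N, S)_\nu = 0$ whenever $\nu \geq t + (n_1, \ldots, n_k)$.

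The main obstacle will be converting this multigraded cone vanishing into the total-degree bound $\Tor^R_i(N, S)_\nu = 0$ for every $\nu$ with $|\nu| > |t| + i$, since such $\nu$ need not lie in the cone. The plan is to exploit the Koszul support restriction: any non-vanishing degree $\nu$ of $\Tor^R_i(N, S)$ forces $\nu \geq t + \alpha$ for some $\alpha \in \mathbb{N}^k$ with $|\alpha| = i$ and $\alpha_j \leq n_j$. Combined with the slack $|\nu - t - \alpha| \geq 1$, this produces a coordinate $l$ in which an extra Koszul variable can be introduced; using the surjectivity of multiplication established for (a), any cycle representing such a class should reduce to a boundary or to one with degree in the cone where vanishing is already known, giving the claimed regularity.
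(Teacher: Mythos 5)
Your outline correctly identifies the two ingredients that the conclusion should rest on, and the first two steps are sound: passing from $\mu\not\in \mathbb C_B(M)^*$ to the cone vanishings for $H^\bullet_{B_j}(M)$, $H^\bullet_{\mathfrak m}(M)$ and, via Theorem~\ref{basicincl}, for $\Tor_\bullet^R(M,S)$ in degrees $\geq \mu+(n_1,\dots,n_k)$; and the Koszul-degree bookkeeping on $Q=M/M_{t+\mathbb N^k}$ together with the long exact Tor sequence giving $\Tor_i^R(N,S)_\nu=0$ for all $i$ and all $\nu\geq t+(n_1,\dots,n_k)$. This is essentially the content of Proposition~\ref{tortrunc}, re-derived from scratch.

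The genuine gap is exactly the one you flag yourself: converting the multigraded cone vanishing into the total-degree bound $\Tor_i^R(N,S)_\nu=0$ whenever $\vert\nu\vert>\vert t\vert+i$. The sketch you offer (finding a coordinate $l$ with slack, ``introducing an extra Koszul variable'', and claiming the cycle ``should reduce to a boundary or to one with degree in the cone'') is a hope rather than an argument. The trouble is structural: the degrees where the Koszul complex on $N$ could carry homology but where you have established no vanishing form a large region --- all $\nu\in t+\mathbb N^k$ with $\nu\not\geq t+(n_1,\dots,n_k)$ --- and there is no single direction in which to push a cycle. In the one-variable-block case $k=1$ the trick does work, because there $\nu>t+i$ forces $K_j(\mathbf X;R_{\geq t})_\nu=K_j(\mathbf X;R)_\nu$ for $j\leq i+1$ and one can invoke exactness of the full Koszul complex; but for $k\geq 2$ the analogous coincidence of truncated and untruncated Koszul terms is only a cone condition, not a total-degree one, so the argument does not carry over. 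The missing ingredient is precisely the Eisenbud--Erman--Schreyer fact that truncations $R(-\sigma)_{t+\mathbb N^k}$ of free modules have \emph{linear} resolutions (with $\reg=\vert\sigma\vert+\vert(t-\sigma)^+\vert$). The paper feeds this into Lemma~\ref{slinrestrunc}: the truncation $(F_\bullet)_{t+\mathbb N^k}$ of a free resolution of $M_{\mu+\mathbb N^k}$ (whose shifts lie in $\mu+\Delta$ by Proposition~\ref{tortrunc}) is a complex of modules with controlled total-degree regularity, and Proposition~\ref{regbound} then bounds $\reg(M_{t+\mathbb N^k})$. Your Koszul manipulations have no substitute for that regularity input on the truncated free summands, so the final step does not go through as written.

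A secondary, smaller issue: you argue for $\reg(N)=\vert t\vert$ via generating degree plus a bound on Tor shifts. The paper's regularity is the one coming from local cohomology (Proposition~\ref{regbound}); over a field these agree, but since the paper works over an arbitrary commutative $S$ you would at least need to invoke the Koszul/\v Cech spectral sequence to pass from Tor shifts to $H^\bullet_{\mathfrak m}$ vanishing, a point worth making explicit if you pursue this route.
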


\section{Multigraded support}

Let $R$ be a commutative unitary ring and $I=(f_1,\ldots ,f_s)$ a finitely generated ideal of $R$. Given an $R$-module $M$, the \v{C}ech cohomology modules $H^i_I(M)$ of $M$ supported on $I$ are the cohomologies of the \v{C}ech complex $\check\Cc^\bullet_\mathbf{f}(M)$ of $M$ with respect to the finite tuple $\mathbf{f}=(f_1,\ldots ,f_s)$. These modules only depend upon $M$ and the radical of $I$, up to isomorphism; they coincide with the derived functors of $H^0_I (-)$ applied to $M$ whenever $R$ is Noetherian, or $I$ is generated by a regular sequence, or if $R$ is a polynomial ring over another ring and $I$ is a monomial ideal -- this will be the context in use in most of this text.

We now setup notations for working in standard multigraded polynomial rings. 
Let $S$ be a commutative unitary ring, $k\geq1$ an integer and denote the standard $\mathbb Z^k$-graded polynomial ring by $R=S[X_{1,1},...,X_{1,n_1},...,X_{k,1},...,X_{k,n_k}]$. For each set of variables $\mathbf{X}_i=\{X_{i,1},\ldots,X_{i,n_i}\}$ write $B_i$ for the ideal generated by this $i$-th set of variables, $B=B_1\cap\ldots\cap B_k=B_1\cdots B_k$ and $\mathfrak{m}=B_1+\ldots+B_k$. We write $\mathbb Z^k =\oplus_{i=1}^{k}\mathbb Z e_i$, where the $i$-th canonical generator of $\mathbb Z^k$ corresponds to the degree of the set of variables generating $B_i$. Unless another grading is specified, a graded $R$-module is a $\mathbb Z^k$-graded $R$-module. The degree will be written $\deg$ and $\tdeg$ will denote the total degree, they respectively belong to $\mathbb Z^k$ and $\mathbb Z$. Also $\mathbb N =\mathbb Z_{\geq 0}$ and if $G$ is an additive group and $E,F$ two subsets of $G$, $E+F=\{ e+f\ \vert\ e\in E, f\in F\}$, $-E:=\{ -e\ \vert\ e\in E\}$ and $E-F=E+(-F)$; to simplify notations if $e\in G$,  $e+F:=\{ e\} +F$. These conventions and notations will be in use for all the text and we define

\begin{definition}\label{supdef}
The \emph{support} of a graded $R$-module $M$ is $$\Supp_{\mathbb Z^k}(M):=\{\gamma\in \mathbb Z^k :M_\gamma\neq0\}.$$
Also, given a homogeneous finitely generated ideal $I$, we set $$\mathbb C_I^i(M):=\Supp_{\mathbb Z^k}(H^i_I(M)) \ \mbox{and} \ \mathbb C_I(M):=\bigcup_{i}\mathbb C_I^i(M).$$
\end{definition}

For a graded $R$-module $M$ and for $\mu\in\mathbb Z^k$ we define the $R$-module $M(-\mu)$ as $M$ with grading defined by $M(-\mu )_\nu :=M_{\mu -\nu}$.

\begin{lemma}\label{shiftlemma}
If $M$ is a graded $R$-module, then $\Supp_{\mathbb Z^k} (M(-\mu))=\Supp_{\mathbb Z^k}(M)+\mu$. 
\end{lemma}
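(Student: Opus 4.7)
The plan is to prove both inclusions by a direct unwinding of the definitions; no structural input about $R$ or $M$ beyond the grading is needed.

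First I would fix $\nu \in \mathbb Z^k$ and rewrite the condition $\nu \in \Supp_{\mathbb Z^k}(M(-\mu))$ as $M(-\mu)_\nu \neq 0$. Using the displayed definition of the shift (read with the standard convention $M(-\mu)_\nu = M_{\nu - \mu}$, which is what makes the stated conclusion correct), this is equivalent to $M_{\nu - \mu} \neq 0$, i.e.\ $\nu - \mu \in \Supp_{\mathbb Z^k}(M)$. Setting $\gamma := \nu - \mu$ and using $\nu = \gamma + \mu$, this is in turn equivalent to $\nu \in \Supp_{\mathbb Z^k}(M) + \mu$. Chaining these equivalences gives
\[
\nu \in \Supp_{\mathbb Z^k}(M(-\mu)) \iff \nu \in \Supp_{\mathbb Z^k}(M) + \mu,
\]
which proves both inclusions simultaneously.

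The only obstacle I foresee is cosmetic rather than mathematical: the displayed formula $M(-\mu)_\nu := M_{\mu-\nu}$ in Definition/paragraph preceding the lemma appears to be a typographical sign slip (with that reading one would obtain $\Supp_{\mathbb Z^k}(M(-\mu)) = \mu - \Supp_{\mathbb Z^k}(M)$, contradicting the stated conclusion). Under the standard convention $M(-\mu)_\nu = M_{\nu-\mu}$ the lemma is immediate, and the proof is nothing beyond the biconditional chain above, with no case analysis, no finiteness hypothesis, and no recourse to any of the local cohomology or Tor machinery introduced later in the paper.
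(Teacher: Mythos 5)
Your proof is correct and is essentially identical to the paper's (a one-line unwinding of the definition of the shift); you also correctly flag the sign typo in the displayed definition $M(-\mu)_\nu := M_{\mu-\nu}$, which should read $M(-\mu)_\nu = M_{\nu-\mu}$, the convention the paper's own proof actually uses.
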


\begin{proof}
Indeed $\nu \in \Supp_{\mathbb Z^k} (M(-\mu ))$ if and only if $\nu -\mu\in \Supp_{\mathbb Z^k} (M)$, equivalently $\nu \in \Supp_{\mathbb Z^k} (M)+\mu$.
\end{proof}

Shifts appearing in local cohomology with respect to the sum of some of the $B_i$'s will appear in many places, and we introduce the notation:
$$
a_{i_1,\cdots ,i_p}:=-\sum_{j=1}^{p}n_{i_j}e_{i_j}\in \mathbb Z^k\quad {\rm and}\quad a:=a_{1,\ldots ,k}=-(n_1,\ldots ,n_k).
$$

\begin{example}\label{suppex}
For $1\leq i_1<\cdots <i_p\leq k$, $\mathbb C^i_{B_{i_1}+...+B_{i_p}}(R)=\emptyset$ unless $i=n_{i_1}+\cdots +n_{i_p}$ and
 $$\mathbb C^{n_{i_1}+\cdots +n_{i_p}}_{B_{i_1}+...+B_{i_p}}(R)=a_{i_1,\cdots ,i_p}+\prod_{j=1}^k (-1)^{\varepsilon_j}\mathbb N,$$
 with $\varepsilon_j :=1$ if $j\in \{ i_1,\ldots ,i_p\}$ and $\varepsilon_j :=0$ else.
 
In particular, $\mathbb C^i_\mathfrak{m}(R)=\emptyset$ for $i\not= n_1+\cdots +n_k$ and
$$\mathbb C_\mathfrak{m}(R)=\mathbb C_\mathfrak{m}^{n_1+\cdots +n_k}(R)
=a-\mathbb{N}^k.$$

\end{example}

The following example illustrates supports.

\begin{example}\label{supportk=2ex}
By taking $k=2$ and $n_1=3$ and $n_2=5$ in Example \ref{suppex}, we have the following regions $\mathbb C_\mathfrak{m}(R)=(-3,-5)-\mathbb{N}^2, \mathbb C_{B_1}(R)=(-3,0)+(-\mathbb{N})\times\mathbb{N}$ and $\mathbb C_{B_2}(R)=(0,-5)+\mathbb{N}\times (-\mathbb{N})$. 

 \begin{figure}[!h]
 \includegraphics[width=8cm]{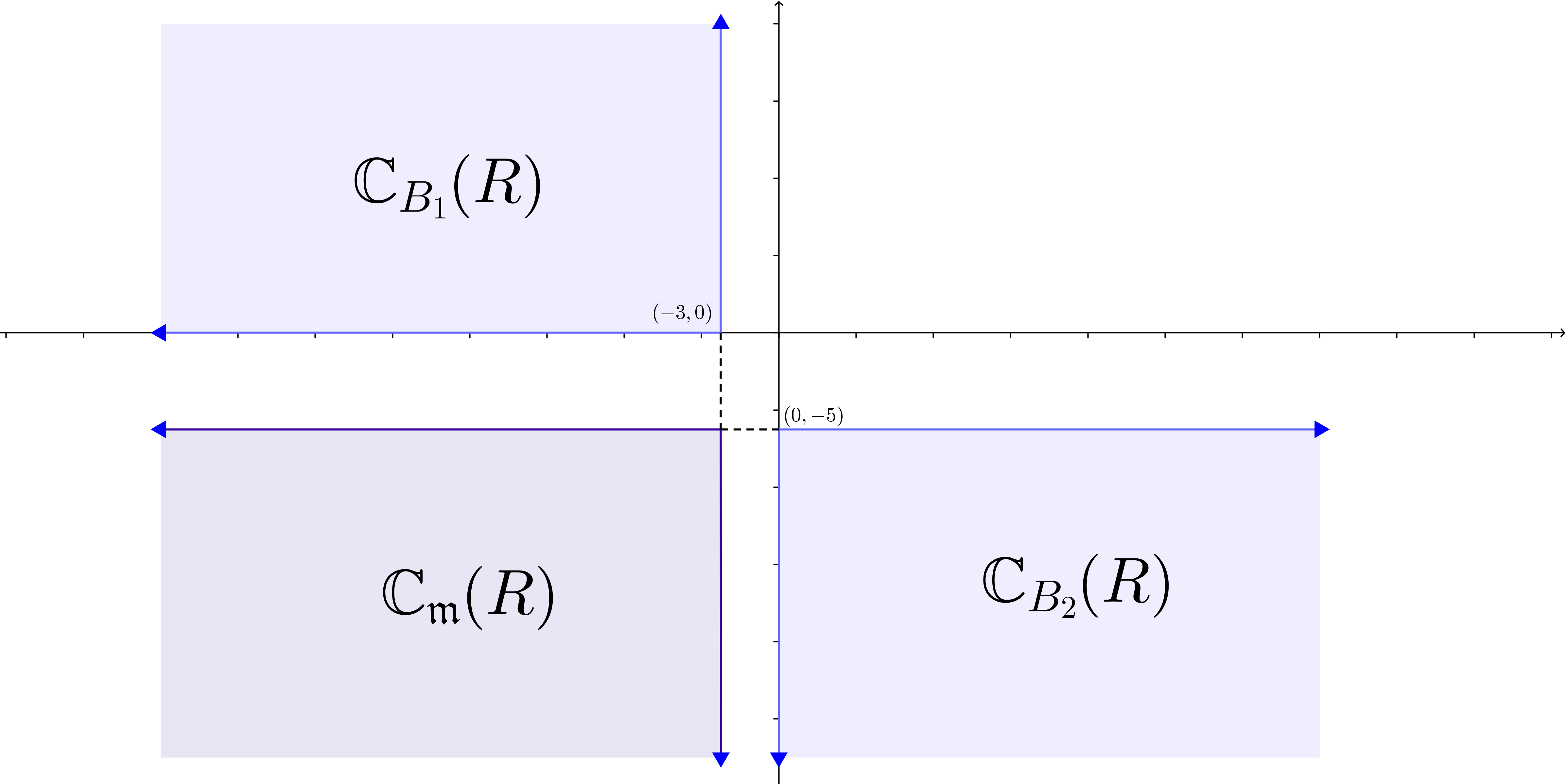}
 \centering
 \end{figure}

\end{example}
As detailed in the work of Eisenbud, Mustata and Stillman \cite{EMS} whenever the base ring is a field, cohomology with respect to $B$ of a graded $R$-module $M$ is providing the sheaf cohomology of the corresponding sheaf $\mathcal{F}$ on $\mathbb{P}:=\mathbb{P}^{n_1-1}\times\cdots\times \mathbb{P}^{n_k-1}$ (where projective spaces and products are taken over the spectrum of $S$):
$$
H^i(\mathbb{P},\mathcal{F}(\mu ))\simeq H^{i+1}_{B}(M)_\mu ,\quad \forall i>0,\quad \forall \mu\in \mathbb{Z}^k,
$$
and there is a natural short exact sequence of graded modules
$$
\xymatrix{
0\ar[r]&H^{0}_{B}(M)\ar[r]& M\ar[r]&\oplus_\mu H^0(\mathbb{P},\mathcal{F}(\mu ))\ar[r]&H^{1}_{B}(M)\ar[r]& 0.\\
}
$$

The following result is essential as it shows that the regions where cohomology vanishes that we will investigate are non trivial in many cases; recall also that, for a same module, local cohomology is invariant under arbitrary base change.

\begin{theorem}\label{NoethBounded}
Let $M$ be a finitely generated graded $R$-module. If $S$ is Noetherian, 

(i) $\Supp (\Tor_i^R(M,S))$ is a finite set, for every $i$,

(ii) there exists $\mu$ in $\mathbb{Z}^k$ such that 
$\mathbb C_{\mathfrak m} (M)\subseteq \mu -\mathbb{N}^k$,

(iii) 
there exists $\nu$ in $\mathbb{Z}^k$ such that 
$\mathbb C_B (M)\cap (\nu +\mathbb{N}^k )=\emptyset$.
\end{theorem}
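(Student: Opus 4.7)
The plan is to exploit a graded free resolution $F_\bullet \to M \to 0$ of $M$ by finitely generated free modules, which exists because $S$ Noetherian implies $R$ Noetherian (Hilbert basis theorem) and $M$ is finitely generated; write $F_p = \bigoplus_j R(-\mu_j^{(p)})$. Part (i) is then immediate: $\Tor_i^R(M,S) = H_i(F_\bullet \otimes_R S)$ is a subquotient of $F_i \otimes_R S = \bigoplus_j S(-\mu_j^{(i)})$, whose $\mathbb{Z}^k$-support is the finite set $\{\mu_j^{(i)}\}_j$.

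For (ii) and (iii), I would run a uniform dimension-shift along the resolution. Both \v{C}ech complexes computing $H^i_{\mathfrak m}$ and $H^i_B$ involve only the $n := n_1+\cdots+n_k$ variables of $R$, so $H^i_J = 0$ for $i > n$ whenever $J \in \{\mathfrak m, B\}$. Iterating the short exact sequences $0 \to K_{p+1} \to F_p \to K_p \to 0$ (with $K_0 := M$) and applying the long exact sequence of $H^\bullet_J$, an easy induction yields $\mathbb C^i_J(M) \subseteq \bigcup_{p=0}^{n-i} \mathbb C^{i+p}_J(F_p)$ (the residual term $H^{n+1}_J(K_{n-i+1})$ vanishes), hence $\mathbb C_J(M) \subseteq \bigcup_{p \leq n} \mathbb C_J(F_p)$. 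The problem thus reduces to bounding $\mathbb C_J(F_p)$ for each $p$.

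For $J = \mathfrak m$, Example \ref{suppex} and Lemma \ref{shiftlemma} give $\mathbb C_{\mathfrak m}(R(-\nu)) = \nu + a - \mathbb N^k \subseteq \nu - \mathbb N^k$; letting $\mu$ be the coordinate-wise maximum of all $\mu_j^{(p)}$ for $p \leq n$ yields $\mathbb C_{\mathfrak m}(F_p) \subseteq \mu - \mathbb N^k$ for every $p$, proving (ii). For $J = B$, the crux is the claim $\mathbb C_B(R) \cap \mathbb N^k = \emptyset$: for $\mu \in \mathbb N^k$, the Künneth formula on $\mathbb P = \mathbb P^{n_1-1}_S \times_S \cdots \times_S \mathbb P^{n_k-1}_S$ (applicable because each $H^{i_l}(\mathbb P^{n_l-1}_S, \mathcal O(\mu_l))$ is a free $S$-module) combined with the vanishing of these factor cohomologies for $i_l \geq 1, \mu_l \geq 0$ gives $H^0(\mathbb P, \mathcal O(\mu)) = R_\mu$ and $H^{\geq 1}(\mathbb P, \mathcal O(\mu)) = 0$; the exact sequence and isomorphisms preceding the theorem then force $H^i_B(R)_\mu = 0$ for all $i$. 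By Lemma \ref{shiftlemma}, $\mathbb C_B(R(-\mu_j^{(p)})) \cap (\mu_j^{(p)} + \mathbb N^k) = \emptyset$; taking $\nu$ to be the coordinate-wise maximum of all $\mu_j^{(p)}$ for $p \leq n$ ensures $\nu + \mathbb N^k \subseteq \mu_j^{(p)} + \mathbb N^k$ for every $j,p$, and hence $\mathbb C_B(F_p) \cap (\nu + \mathbb N^k) = \emptyset$, proving (iii).

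The technical core is the claim $\mathbb C_B(R) \cap \mathbb N^k = \emptyset$ over a Noetherian (not necessarily field) base. The Künneth route works cleanly because all factor cohomologies on $\mathbb P^{n_l-1}_S$ are free $S$-modules, avoiding $\Tor$-corrections. A more intrinsic alternative, fitting the spirit of the paper, would be to invoke the spectral sequence $\bigoplus H^{p-q-1}_{B_{i_1}+\cdots+B_{i_p}}(R) \Rightarrow H^q_B(R)$ mentioned in the introduction together with Example \ref{suppex}: for every nonempty $\{i_1,\ldots,i_p\}$ each initial term has its $i_l$-coordinates bounded above by $-n_{i_l} < 0$, hence is disjoint from $\mathbb N^k$, and so is the abutment.
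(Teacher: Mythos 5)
Your proof is correct and takes a genuinely different, more self-contained route than the paper. The paper's proof simply cites \cite[Theorem~4.14]{BC} (existence of the multigraded regularity region $\reg_B(M)$ over Noetherian $S$) for parts (ii) and (iii), and for (i) notes only that the Tor modules are finitely generated $S$-modules. You instead reduce both (ii) and (iii) to the explicit computation of $\mathbb C_{\mathfrak m}(R)$ and $\mathbb C_B(R)$ by iterating long exact sequences of local cohomology along a finite free resolution; this dimension-shift is, in effect, an unstabilized version of the inclusion in Theorem~\ref{basicincl}, re-derived independently so as to avoid circularity (that theorem appears later in the paper). Your treatment of (i) differs only cosmetically (subquotient of $F_i\otimes_R S$ versus finite generation over $S$). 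The trade-off: the paper's route is one line once \cite{BC} is granted, while yours is elementary and self-contained.

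Two small remarks. First, $H^i_B=0$ for $i>n$ does not follow from the length of the \v Cech complex for $B$, whose natural generating set is the $n_1\cdots n_k$ products of one variable from each block; any finite bound suffices for the dimension-shift (the crude bound $n_1\cdots n_k$ works, and the sharper bound $n$ can be read off the spectral sequence of Corollary~\ref{marcss}), so this is an imprecision rather than a gap. Second, the paper states the Serre--Grothendieck correspondence relating $H^\bullet_B$ to sheaf cohomology only for $S$ a field, so your K\"unneth route technically extrapolates it to a Noetherian base. The extrapolation is valid (the identification is a \v Cech computation and K\"unneth applies without $\Tor$-corrections because the factor cohomologies on each $\mathbb P^{n_l-1}_S$ are free $S$-modules), but your alternative argument---feeding Example~\ref{suppex} into the spectral sequence of Corollary~\ref{marcss} to get $\mathbb C_B(R)\cap\mathbb N^k=\emptyset$---sidesteps the issue and stays entirely within the paper's toolkit, so it is the cleaner choice.
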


\begin{proof}
For (i), recall that these Tor modules are finitely generated $S$-modules. Items
(ii) and (iii) follow from \cite[Theorem 4.14]{BC} : for (ii) by  Examples \ref{suppex} and because any finite union of shifts of $-\mathbb N^k$ is contained in $\mu -\mathbb N^k$ for some $\mu$, and for (ii) as there exists $\nu\in\reg_B(M)$ and, for any such $\nu$, $\mathbb C_B(M)\cap(\nu+\mathbb N^k)=\emptyset$.
\end{proof}

\section{Tor and local cohomology modules}

\subsection{An auxiliary result}

We first recall the following classical result that is a key ingredient in our first main result.

\begin{theorem}\label{CohModRes}
Let $I$ be a homogeneous ideal generated by a regular sequence ${\mathbf f}$ of length $r$ and $M$ a graded $R$-module. If $F_\bullet$ is a graded free $R$-resolution of $M$, for any $p$, there exists a degree zero graded isomorphism
$$
H^p_I (M)\simeq H_{r-p}(H^r_I (F_\bullet)).
$$
\end{theorem}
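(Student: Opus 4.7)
The plan is to form the double complex $C^{p,q} := \check{\Cc}^p_{\mathbf{f}}(F_q)$ obtained by applying the \v{C}ech complex with respect to $\mathbf{f}$ to the free resolution $F_\bullet$ (taken with homological indexing, so $d^F : F_q \to F_{q-1}$), and then to compare the two spectral sequences of its total complex. Concretely, I set $T^n := \bigoplus_{p-q = n} C^{p,q}$ with total differential induced by $d^{\check{\Cc}} : C^{p,q} \to C^{p+1,q}$ and (up to sign) $d^F : C^{p,q} \to C^{p,q-1}$; both bump the cohomological total degree by $1$.

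First I would run the column filtration (filter by $p$). The $E_0$-differential is $d^F$, and since each component of $\check{\Cc}^p_{\mathbf{f}}$ is a localization of $R$, it is flat. Thus $\check{\Cc}^p_{\mathbf{f}}(F_\bullet)$ is a resolution of $\check{\Cc}^p_{\mathbf{f}}(M)$; the $E_1$-page is concentrated in the row $q=0$ and equals $\check{\Cc}^p_{\mathbf{f}}(M)$. Taking $d^{\check{\Cc}}$-cohomology then gives $E_2^{p,0} = H^p_I(M)$, and the spectral sequence degenerates, so $H^n(T^\bullet) \simeq H^n_I(M)$.

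Next I would run the row filtration (filter by $q$). The $E_0$-differential is $d^{\check{\Cc}}$, giving $E_1^{p,q} = H^p_I(F_q)$. The key input is that because $\mathbf{f}$ is a regular sequence and $F_q$ is free, the \v{C}ech cohomology of $F_q$ with respect to $\mathbf{f}$ is concentrated in top degree: $H^p_I(F_q) = 0$ for $p \neq r$. Hence $E_1$ is concentrated in the column $p = r$, and the $E_2$-page is $E_2^{r,q} = H_q(H^r_I(F_\bullet))$. This spectral sequence also degenerates, and a term in $E_2^{r,q}$ contributes to $H^{r-q}(T^\bullet)$. Setting $p := r-q$, we read off $H^p(T^\bullet) \simeq H_{r-p}(H^r_I(F_\bullet))$. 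Comparing the two computations of $H^\bullet(T^\bullet)$ yields the desired isomorphism; all maps are $\mathbb{Z}^k$-graded of degree zero since the \v{C}ech differentials and the differentials of $F_\bullet$ are.

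The main obstacle, and the one technical point I want to be careful with, is the vanishing $H^p_I(F_q) = 0$ for $p \neq r$ when $F_q$ is free and $\mathbf{f}$ is regular. This is standard but genuinely uses regularity: for $R$ itself it follows from the fact that the \v{C}ech complex of a regular sequence is quasi-isomorphic to its top cohomology (equivalently, the Koszul/\v{C}ech comparison for a regular sequence), and it then extends to arbitrary free modules by additivity. Apart from this, the only bookkeeping is the sign/indexing conversion between the homological grading of $F_\bullet$ and the cohomological grading of $\check{\Cc}^\bullet_{\mathbf{f}}$ in the total complex, which gives the shift from $p$ to $r-p$ in the conclusion.
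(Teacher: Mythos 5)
Your proof is correct and takes essentially the same route as the paper: both form the double complex $\check\Cc^\bullet_{\mathbf f}(F_\bullet)$ and compare its two spectral sequences, using flatness of localization to collapse one to $H^p_I(M)$ and the vanishing $H^i_I(F_q)=0$ for $i\neq r$ (valid since $\mathbf f$ is regular and $F_q$ is free) to collapse the other to $H_{r-p}(H^r_I(F_\bullet))$. Your write-up supplies the indexing and degeneration details that the paper's one-sentence proof leaves implicit, but there is no difference in method.
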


\begin{proof}
The second pages of the two spectral sequences from the double complex $\check\Cc^\bullet_{\mathbf f}(F_\bullet)$  are concentrated respectively on a line (as $H^i_I (F_\bullet)=0$ unless $i=r$) and on a column (as localization is flat) and are isomorphic to the right and left hand sides of the claimed isomorphism.
\end{proof}

As consequence of Theorem \ref{CohModRes} and \cite[Lemma 6.4.7]{Bot} we obtain the following spectral sequence.

\begin{corollary}\label{marcss}
Let $M$ be a graded $R$-module. There exists a spectral sequence of graded modules,
$$
\bigoplus_{1\leq i_1<\cdots <i_p\leq k}H^{p-q-1}_{B_{i_1}+\cdots +B_{i_p}}(M)\Rightarrow H^q_B (M).
$$
\end{corollary}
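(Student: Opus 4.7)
The plan is to combine Theorem \ref{CohModRes} with the Mayer--Vietoris complex of \cite[Lem.~6.4.7]{Bot}. Fix a graded free $R$-resolution $F_\bullet\twoheadrightarrow M$. For each nonempty $S=\{i_1,\ldots,i_p\}\subseteq\{1,\ldots,k\}$, set $B_S:=B_{i_1}+\cdots+B_{i_p}$ and $n_S:=n_{i_1}+\cdots+n_{i_p}$. Since $B_S$ is generated by the regular sequence consisting of the variables indexed by $S$, Example \ref{suppex} gives $H^j_{B_S}(F)=0$ for every graded free $R$-module $F$ and every $j\neq n_S$; Theorem \ref{CohModRes} then yields a degree-zero isomorphism
\[
H^j_{B_S}(M)\;\simeq\;H_{n_S-j}\bigl(H^{n_S}_{B_S}(F_\bullet)\bigr),
\]
so each $B_S$-local cohomology of $M$ is computed from the single "top" complex $H^{n_S}_{B_S}(F_\bullet)$.

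The role of \cite[Lem.~6.4.7]{Bot} is dual: applied to any graded free $R$-module $F$, it supplies a bounded higher Mayer--Vietoris complex whose $p$-th position is $\bigoplus_{|S|=p} H^{n_S}_{B_S}(F)$, with differentials induced by the natural maps $H^{n_S}_{B_S}(F)\to H^{n_{S'}}_{B_{S'}}(F)$ for $S\subset S'$ alternated in \v{C}ech fashion, and whose (shifted) cohomology recovers $H^{\bullet}_B(F)$. Applying this construction term-by-term to $F_\bullet$ produces a bicomplex $C^{\bullet,\bullet}$ indexed by $p=|S|$ and by the homological degree of the resolution.

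The two standard spectral sequences of $C^{\bullet,\bullet}$ share the same abutment. Computing cohomology in the resolution direction first, each column collapses by the isomorphism of the first paragraph, and after matching the shifts from Theorem \ref{CohModRes} and from the lemma the $E_2$-page becomes
\[
\bigoplus_{|S|=p} H^{p-q-1}_{B_S}(M).
\]
Computing in the $p$-direction first, the Mayer--Vietoris rows collapse by the defining property of Lemma 6.4.7 and reduce $C^{\bullet,\bullet}$ to a complex whose remaining cohomology is $H^q_B(M)$. Identifying the two abutments yields the claimed spectral sequence.

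The main obstacle is the index matching: the reversal of homological degree in Theorem \ref{CohModRes} (replacing $j$ by $n_S-j$) and the shift built into Lemma 6.4.7 must combine to give exactly the exponent $p-q-1$ on the $E_2$-page rather than some other linear combination. Once this bookkeeping is done, convergence is formal from the bicomplex; no further input beyond the two cited results is needed.
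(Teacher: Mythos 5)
Your outline shares the ingredients of the paper's proof (a double complex spectral sequence, Theorem~\ref{CohModRes}, and~\cite[Lemma~6.4.7]{Bot}), but the bicomplex you build is not the one the paper uses, and the substitution opens a gap at the crucial step of identifying the abutment.

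The paper's double complex is $\check\Cc^\bullet_{\mathbf f}(F_\bullet)$, the \v{C}ech complex for $B$ applied to the free resolution. The decisive point there is that each $\check\Cc^j_{\mathbf f}(-)$ is a direct sum of localizations, hence exact; so taking homology in the resolution direction first kills everything except $i=0$, where one is left with $\check\Cc^\bullet_{\mathbf f}(M)$, and the first spectral sequence collapses on the nose to $H^q_B(M)$. Lemma~6.4.7 is then invoked on the other spectral sequence only to rewrite the already-computed vertical cohomology $H^j_B(F_i)$ as $\bigoplus_{S:\, n_S=j+p-1}H^{n_S}_{B_S}(F_i)$, after which Theorem~\ref{CohModRes} identifies the $E_2$-terms with $B_S$-local cohomology of $M$.

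Your bicomplex $C^{p,i}=\bigoplus_{|S|=p}H^{n_S}_{B_S}(F_i)$ instead has local cohomology modules as entries. Unlike $\check\Cc^j_{\mathbf f}(F_i)$, the entries $H^{n_S}_{B_S}(F_i)=F_i\otimes_R H^{n_S}_{B_S}(R)$ are not obtained by a flat functor, so the homology of the columns in the $i$-direction does not vanish away from $i=0$; Theorem~\ref{CohModRes} precisely says it equals $H^{n_S-i}_{B_S}(M)$ for all $i$. Consequently, when you say the Mayer--Vietoris rows ``collapse by the defining property of Lemma~6.4.7 and reduce $C^{\bullet,\bullet}$ to a complex whose remaining cohomology is $H^q_B(M)$,'' this is not justified: taking cohomology in the $p$-direction first gives $H^\bullet_B(F_i)$, and the homology of that in the resolution direction is $H_i(H^\bullet_B(F_\bullet))$, which is not $H^q_B(M)$ and is not shown to vanish for $i>0$. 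You have identified the $E_2$-page you want, but not the abutment. To repair this along your lines you would need Lemma~6.4.7 to supply a natural \emph{quasi-isomorphism} between your Mayer--Vietoris complex and $\check\Cc^\bullet_{\mathbf f}(F_i)$ (not merely an isomorphism of cohomology groups), so that the total complexes agree in the derived category; but the far more direct route, and the one the paper takes, is to start from the \v{C}ech bicomplex and let flatness of localization do the work on the first spectral sequence.
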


\begin{proof}
Let $F_\bullet$ be a graded free resolution of $M$ and $B=(f_1,\ldots ,f_s)$. The double complex $\check\Cc^\bullet_{\mathbf f} (F_\bullet)$
$$\xymatrix@=1em{
&0\ar[d]&0\ar[d]&0\ar[d]& \\
 \cdots\ar[r] & \mathcal C^0_{\mathbf f} (F_2 )\ar[r]\ar[d] & \mathcal C^0_{\mathbf f} (F_1 )\ar[r]\ar[d] & \mathcal C^0_{\mathbf f} (F_0)\ar[r]\ar[d] & 0
\\
 \cdots\ar[r] & \mathcal C^1_{\mathbf f}(F_2)\ar[r]\ar[d] & \mathcal C^1_{\mathbf f}(F_1)\ar[r]\ar[d] & \mathcal C^1_{\mathbf f}(F_0)\ar[r]\ar[d] & 0
\\
 &\vdots& \vdots&\vdots&
\\
}$$ 
yields two spectral sequences that converge to filtrations of the same graded module $H$. Taking homologies in the horizontal direction first, we get a spectral sequence $'E$ such that $'E_2^{0,-j}=H^j_B(M)$ and $'E_2^{-i,-j}=0$ whenever $i\neq0$; hence $H^j_B(M)\simeq H^j$ for all $j\geq0$. On the other hand, by \cite[Lemma 6.4.7]{Bot}, the other spectral sequence $E$ is such that $$E_2^{-i,-j}=H_i\left(\bigoplus_{\substack{1\leq i_1<...<i_p\leq k\\n_{i_1}+...+n_{i_p}=j+p-1}}H^{n_{i_1}+...+n_{i_p}}_{B_{i_1}+...+B_{i_p}}(F_\bullet)\right)\simeq\bigoplus_{1\leq i_1<...<i_p\leq k}H^{p+(j-i)-1}_{B_{i_1}+...+B_{i_p}}(M)$$ and $E_2^{-i,-j}\Rightarrow_i H^{i-j}\simeq H^{i-j}_B(M)$, whence the result.
\end{proof}

It is worth mentioning that the spectral sequence above has similarities with the one of Lyubeznik's \cite{Lyu}, although it has second page terms on a diagonal equal to the ones on a diagonal in the first page in Lyubeznik's approach.
\subsection{Stable sets}

It is convenient to introduce a notation for multigraded support of Tor modules, and for some shifts of these that are tightly connected to support of local cohomology, as we will show.

\begin{definition}
Given a graded $R$-module $M$ and $i_1,\dots ,i_p$ distinct elements in $\{ 1,\dots ,k\}$
$$
\mathbb T_j^{i_1,\dots ,i_p} (M):=\Supp_{\mathbb{Z}^k} (\Tor_j^R(M,R/(B_{i_1}+\dots +B_{i_p}))$$
and ${\mathbb T}^{i_1,\dots ,i_p}(M):=\cup_j \mathbb T_j^{i_1,\dots ,i_p} (M)$. Also,
$$
\hat{\mathbb T}_j^{i_1,\dots ,i_p} (M):=\mathbb T_j^{i_1,\dots ,i_p} (M)+a_{i_1,\ldots ,i_p}\ {\rm and}\ \hat{\mathbb T}^{i_1,\dots ,i_p} (M):={\mathbb T}^{i_1,\dots ,i_p} (M)+a_{i_1,\ldots ,i_p}.
$$

For simplicity,
$$
\mathbb T_j (M):=\mathbb T_j^{1,\dots ,k} (M)=\Supp_{\mathbb{Z}^k} (\Tor_j^R(M,S))\quad{\rm and}\quad \mathbb T (M):=\cup_j \mathbb T_j(M),
$$
and similarly $\hat{\mathbb T}_j (M):=\mathbb T_j (M)+a$ and $\hat{\mathbb T} (M):=\mathbb T (M)+a$.
\end{definition}

\begin{lemma}\label{stabcomp}
Let $E\subset \mathbb{Z}^k$. The smallest set containing $E$ such that its complement is stable under the addition of $\mathbb{N}^k$ is $E-\mathbb{N}^k$.
\end{lemma}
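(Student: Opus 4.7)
The plan is to verify the three things that the statement demands of $E-\mathbb{N}^k$: (a) it contains $E$, (b) its complement is stable under addition of $\mathbb{N}^k$, and (c) any other set with properties (a) and (b) must contain it. None of these should be hard; (b) and (c) are essentially the same contrapositive computation.

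First I would note that $E\subseteq E-\mathbb{N}^k$ since $0\in\mathbb{N}^k$. For stability of the complement, I would argue by contrapositive: if $x\in\mathbb{Z}^k$ satisfies $x+m\in E-\mathbb{N}^k$ for some $m\in\mathbb{N}^k$, then $x+m=e-n$ for some $e\in E$ and $n\in\mathbb{N}^k$, and rearranging gives $x=e-(n+m)$ with $n+m\in\mathbb{N}^k$, so $x\in E-\mathbb{N}^k$. Equivalently, $\mathbb{Z}^k\setminus(E-\mathbb{N}^k)$ is stable under addition by $\mathbb{N}^k$.

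For minimality, let $F\supseteq E$ be any subset of $\mathbb{Z}^k$ whose complement is stable under addition of $\mathbb{N}^k$. I would take an arbitrary element $x=e-n\in E-\mathbb{N}^k$ with $e\in E$, $n\in\mathbb{N}^k$, and suppose for contradiction that $x\notin F$. By stability of the complement, $x+n=e\notin F$, contradicting $E\subseteq F$. Hence $E-\mathbb{N}^k\subseteq F$.

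I do not expect any real obstacle here: the statement is a set-theoretic unwinding of the definition of the star operation, and the whole proof is two short contrapositive arguments. The only thing to be slightly careful about is distinguishing, notationally, between an arbitrary element of $\mathbb{N}^k$ used to test stability and the one arising from the description of a point of $E-\mathbb{N}^k$, so I would keep the two letters $m$ and $n$ separate.
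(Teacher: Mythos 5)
Your proof is correct and takes essentially the same approach as the paper: both halves are the same short contrapositive computations. If anything, your minimality step is slightly cleaner—you show directly that $E-\mathbb{N}^k\subseteq F$ for an \emph{arbitrary} $F\supseteq E$ with stable complement, whereas the paper restricts to $E\subseteq E'\subseteq E^*$ and shows $E'=E^*$, which tacitly relies on the fact that one can always intersect a candidate $F$ with $E^*$ to land in that range (the complement of an intersection being a union of stable sets, hence stable). Your formulation avoids that extra observation.
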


\begin{proof}
Let $E^*:=E-\mathbb{N}^k$. If $e\not\in E^*$, $e+n$ is not in $E^*$ for all $n\in \mathbb{N}^k$ as otherwise $e+n=e'-n'$ with $e'\in E$ and $n'\in \mathbb{N}^k$, the equality $e=e'-n-n'$ then contradicting the fact that $e\not\in E^*$. Now $E^*$ is minimal as for $E\subseteq E'\subseteq E^*$, with the complement of $E'$ stable, if $e\in E^*\setminus E'$, then $e+n\in E\subseteq E'$ for some $n\in\mathbb{N}^k$,  but also $e+n\notin E'$ due to the stability of the complement of $E'$, thus a contradiction.
\end{proof}

From now on, we denote by $E^*$ the set $E-\mathbb{N}^k$ as in the proof above.

Recall from \cite[Lemma 3.12 i)]{BC} a link between Tor modules and graded free resolutions :

\begin{lemma}\label{TorFR}
Let $M$ be a graded $R$ module such that $\tdeg (m)\geq u$ for some $u\in \mathbb Z$ and every $m\in M$. Then $M$ admits a graded free $R$-resolution $F_\bullet$ such that $F_i$ is generated by elements with degrees sitting in $\cup_{p\leq i}\mathbb T_p(M)$. 
\end{lemma}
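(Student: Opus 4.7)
The plan is to construct $F_\bullet$ inductively, at each stage lifting an $S$-module generating set of the ``Nakayama quotient'' $N/\mathfrak{m}N$ of the current syzygy $N$, where $\mathfrak{m} = B_1+\cdots+B_k$ is the maximal ideal for which $R/\mathfrak{m}=S$. The key observation is that $\Supp(N/\mathfrak{m}N)$ always sits inside the union of supports of the $\Tor_p^R(M,S)$ up through the appropriate homological index, as one reads off from the long exact $\Tor$-sequences of the syzygy short exact sequences.

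First I would handle the base case. For each $\mu \in \mathbb T_0(M) = \Supp(M/\mathfrak{m}M)$, pick elements $g_{\mu,\alpha} \in M_\mu$ whose images form an $S$-module generating set of $(M/\mathfrak{m}M)_\mu$, and set $F_0 := \bigoplus_{\mu,\alpha} R(-\mu)$ with $\phi_0: F_0 \to M$ sending the generators to the chosen $g_{\mu,\alpha}$. To show $\phi_0$ is surjective, note that the cokernel $C := M/\phi_0(F_0)$ satisfies $C = \mathfrak{m}C$. Since $\mathfrak{m}$ lives in total degrees $\geq 1$ and $C$ inherits the bound $\tdeg \geq u$ from $M$, a straightforward induction on total degree forces $C_\nu = 0$ for every $\nu$, giving surjectivity. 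This is the graded-Nakayama step; it is where the hypothesis $\tdeg(m) \geq u$ is used essentially.

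For the inductive step, suppose $F_0,\ldots,F_i$ have been built with generator degrees in $\bigcup_{p \leq i} \mathbb T_p(M)$, and set $Z_i := \ker(F_i \to F_{i-1})$ (with $F_{-1} := M$). The module $Z_i$ is bounded below in total degree since $Z_i \subseteq F_i$ and $F_i$ has generators in total degrees $\geq u$ (as $\mathbb T_p(M)$ sits there). So the base case, applied to $Z_i$, produces $F_{i+1}$ with generator degrees in $\Supp(Z_i/\mathfrak{m}Z_i) = \Supp(\Tor_0^R(Z_i,S))$. To control this support, apply $-\otimes_R S$ to the short exact sequence $0 \to Z_i \to F_i \to Z_{i-1} \to 0$: since $\Tor_1^R(F_i,S) = 0$, one gets an exact sequence
$$
0 \to \Tor_1^R(Z_{i-1},S) \to Z_i \otimes_R S \to F_i \otimes_R S,
$$
so $\Supp(Z_i \otimes_R S) \subseteq \Supp(\Tor_1^R(Z_{i-1},S)) \cup \Supp(F_i \otimes_R S)$. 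Standard dimension shifting along the syzygy sequences gives $\Tor_1^R(Z_{i-1},S) \cong \Tor_{i+1}^R(M,S)$, and $\Supp(F_i \otimes_R S)$ is exactly the set of generator degrees of $F_i$, contained in $\bigcup_{p \leq i}\mathbb T_p(M)$ by induction. Combining, $\Supp(Z_i/\mathfrak{m}Z_i) \subseteq \bigcup_{p \leq i+1} \mathbb T_p(M)$, which is the required bound on the degrees of generators of $F_{i+1}$.

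The hard part is really only the graded Nakayama surjectivity; everything else is bookkeeping with the long exact $\Tor$-sequence. The subtle point is that without the total-degree lower bound one cannot conclude $C = \mathfrak{m}C \Rightarrow C = 0$, since $S$ is an arbitrary commutative ring and $M$ need not be finitely generated. The hypothesis $\tdeg(m) \geq u$ is both necessary to run this argument at the base step and to carry it along: it passes automatically to each syzygy $Z_i$ because $F_i$ has its lowest total degree $\geq u$.
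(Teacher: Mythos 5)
Your proof is correct. The paper does not reprove this lemma; it cites \cite[Lemma 3.12 i)]{BC}. Your argument is the standard inductive graded-Nakayama construction that underlies that reference: the total-degree lower bound is exactly what lets one conclude $C=\mathfrak{m}C\Rightarrow C=0$ (by induction on $|\nu|$, since $\mathfrak{m}$ raises total degree by at least one), the bound passes to each syzygy $Z_i\subseteq F_i$ because $F_i$ has generator degrees of total degree $\geq u$, and the dimension-shifting identity $\Tor_1^R(Z_{i-1},S)\simeq\Tor_{i+1}^R(M,S)$ together with the exact sequence $0\to\Tor_1^R(Z_{i-1},S)\to Z_i\otimes_RS\to F_i\otimes_RS$ delivers exactly the claimed bound $\Supp(Z_i/\mathfrak{m}Z_i)\subseteq\bigcup_{p\leq i+1}\mathbb{T}_p(M)$. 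No gap.
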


Any minimal graded free resolution (no syzygy module has a superfluous generator) satisfy this property. If $S$ is Noetherian and $M$ is finitely generated,  minimal graded free resolutions exist.   Any element in $\mathbb T_i$ appear as a shift in $F_i$ for any graded free $R$-resolution $F_\bullet$ of $M$. One can replace the union by the only set $\mathbb T_i$ if $S$ is a field.

We first show that support of cohomology and support of Tor modules are in one to one correspondence, if one considers the union of all in both cases and stabilize the complement by addition of $-\mathbb{N}^k$. The first and main case is the one relative to the ideal $\mathfrak{m}$.

\begin{theorem}\label{basicincl}
Let $M$ be a graded module.  Then 
$$
\mathbb C_\mathfrak{m}(M)^*=\mathbb T (M)+\mathbb C_\mathfrak{m}(R)=\hat{\mathbb T} (M)^*.
$$
\end{theorem}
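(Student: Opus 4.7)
The second equality is a repackaging: by Example \ref{suppex} we have $\mathbb C_\mathfrak{m}(R) = a - \mathbb N^k$, whence
\[
\mathbb T(M) + \mathbb C_\mathfrak{m}(R) \;=\; (\mathbb T(M) + a) - \mathbb N^k \;=\; \hat{\mathbb T}(M)^*.
\]
By Lemma \ref{stabcomp}, both $\mathbb C_\mathfrak{m}(M)^*$ and $\hat{\mathbb T}(M)^*$ have complement stable under addition of $\mathbb N^k$, so the remaining equality $\mathbb C_\mathfrak{m}(M)^* = \mathbb T(M)+\mathbb C_\mathfrak{m}(R)$ reduces to two inclusions.

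For $\subseteq$, the plan is to apply Theorem \ref{CohModRes} with $I=\mathfrak{m}$ (generated by the regular sequence of all $r:=n_1+\cdots+n_k$ variables), using a graded free resolution $F_\bullet$ of $M$ chosen via Lemma \ref{TorFR}, so that the shifts in $F_i$ lie in $\bigcup_{p\le i}\mathbb T_p(M)\subseteq \mathbb T(M)$. The isomorphism $H^p_\mathfrak{m}(M)\simeq H_{r-p}(H^r_\mathfrak{m}(F_\bullet))$ places $\mathbb C^p_\mathfrak{m}(M)$ inside the support of $H^r_\mathfrak{m}(F_{r-p})$, which by Lemma \ref{shiftlemma} and Example \ref{suppex} sits in $\bigcup_\beta(\beta+\mathbb C_\mathfrak{m}(R))\subseteq \mathbb T(M)+\mathbb C_\mathfrak{m}(R)$. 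Unioning over $p$ and invoking stability of the right-hand side gives $\mathbb C_\mathfrak{m}(M)^*\subseteq \mathbb T(M)+\mathbb C_\mathfrak{m}(R)$.

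For $\supseteq$, by closure of $\mathbb C_\mathfrak{m}(M)^*$ under $-\mathbb N^k$ it suffices to show $\hat{\mathbb T}(M)\subseteq \mathbb C_\mathfrak{m}(M)^*$: that every $\mu\in \mathbb T_j(M)$ admits some $n\in\mathbb N^k$ with $\mu+a+n\in\mathbb C_\mathfrak{m}(M)$. I would take $F_\bullet$ minimal and examine the complex $C_\bullet:=H^r_\mathfrak{m}(F_\bullet)$ at degree $\mu+a$. Each summand $H^r_\mathfrak{m}(R)(-\beta)$ has a one-dimensional socle in degree $\beta+a$, and $\mathfrak{m}$ annihilates this socle because $H^r_\mathfrak{m}(R)_{a+\gamma}=0$ for every $\gamma\in\mathbb N^k\setminus\{0\}$. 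Since minimality forces all differential entries of $F_\bullet$ into $\mathfrak{m}$, a short degree check shows that the outgoing differential of $C_\bullet$ sends the socle contribution of $C_j$ at $\mu+a$ to zero. Thus this socle is a cycle subspace of dimension $m_\mu^j:=\dim_S \Tor^R_j(M,S)_\mu \geq 1$. When $\mu$ is maximal in $\mathbb T(M)$, the incoming differential from $C_{j+1}$ at $\mu+a$ also vanishes (the only possible contribution at this degree comes from the $R(-\mu)$ summand of $F_{j+1}$ via a scalar matrix entry, which is zero by minimality), so the socle class descends to a nonzero element of $H^{r-j}_\mathfrak{m}(M)_{\mu+a}$. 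For non-maximal $\mu$ the plan is to pick a maximal $\mu'\ge \mu$ in $\mathbb T(M)$, apply the maximal-case conclusion to obtain $\mu'+a\in\mathbb C_\mathfrak{m}(M)$, and invoke stability under $-\mathbb N^k$ to place $\mu+a$ in $\mathbb C_\mathfrak{m}(M)^*$.

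The main obstacle is precisely this non-maximal step: absent a maximal element above $\mu$, one cannot reduce directly, and indeed when strictly larger shifts $\beta''>\mu$ lie in $\mathbb T_{j+1}(M)$ the image of $d:C_{j+1}\to C_j$ can hit the socle of $C_j$ at $\mu+a$, so the naive socle argument at $\mu$ itself breaks down. Under the finiteness hypotheses of Theorem \ref{NoethBounded} maximal shifts exist automatically, but the general statement calls either for a Zorn-type argument or a more delicate bookkeeping showing that some class in $\mu+a+\mathbb N^k$ always survives in $H^{r-j}_\mathfrak{m}(M)$. Passing from the field case to arbitrary commutative $S$ is expected to be routine via Lemma \ref{TorFR}, which supplies a sufficiently minimal-like resolution even in the absence of minimal free resolutions in the strict sense.
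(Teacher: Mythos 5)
Your handling of the two outer equalities and of the inclusion $\mathbb C_\mathfrak{m}(M)^*\subseteq \mathbb T(M)+\mathbb C_\mathfrak{m}(R)$ via Theorem \ref{CohModRes} and Lemma \ref{TorFR} matches the paper. The reverse inclusion, however, is where you and the paper diverge, and your route has a genuine gap. The paper does not dualize the resolution or invoke minimality at all: it fixes $\mu\notin\mathbb C_\mathfrak{m}(M)^*$, so $(\mu+\mathbb N^k)\cap\mathbb C_\mathfrak{m}(M)=\emptyset$, and runs the double complex $\check\Cc^\bullet_\mathbf{X}(K_\bullet(\mathbf X;M))$ built from the Koszul complex of the variables on $M$ (not from a free resolution). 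The resulting spectral sequence has $E_1$-terms $K_i(\mathbf X;H^j_\mathfrak m(M))$ abutting to $\Tor^R_{i-j}(M,S)$; in degree $\mu-a$ every Koszul summand is a copy of $H^j_\mathfrak m(M)$ shifted so as to sit in degree $\mu+\delta$ with $\delta\in\mathbb N^k$, since the $p$-th coordinate of any Koszul shift is at most $n_p$. All such pieces vanish by the choice of $\mu$, so $\mu-a\notin\mathbb T(M)$. No socle, no minimality, no maximality of shifts.

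Your socle argument instead requires a minimal free resolution (differentials with entries in $\mathfrak m$), which over an arbitrary commutative $S$ is not what Lemma \ref{TorFR} provides — that lemma only controls the \emph{shifts}, not where the matrix entries land — so the claimed reduction to the field case is not routine. Moreover, you acknowledge the non-maximal step: if $\mathbb T(M)$ has no maximal element above $\mu$ (which can happen once finiteness is dropped), the argument that the socle class survives into $H^{r-j}_\mathfrak m(M)$ does not close, and you leave this as an open obstacle. Finally, passing from the bounded-below case to the general one is itself nontrivial: the paper does it by intersecting over truncations $M[u]:=\{m\in M:\tdeg(m)\ge u\}$, using that $\hat{\mathbb T}(M[u])$ and $\mathbb C_\mathfrak m(M[u])$ agree with the corresponding sets for $M$ in total degree at least $u$ and that $(A\cup E_u)^*=A^*\cup E_u$ for the half-spaces $E_u$; your last paragraph gestures at this but does not carry it out.
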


\begin{proof}
Recall that $\mathbb C_\mathfrak{m}(R)=a-\mathbb{N}^k$, by Example \ref{suppex}. Thus $\mathbb T (M)+\mathbb C_\mathfrak{m}(R)=\hat{\mathbb T} (M) -\mathbb{N}^k=\hat{\mathbb T} (M)^*$.

We first assume that there exists $u\in \mathbb Z$ such that $\tdeg (m)\geq u$ for any $m\in M$ and let $F_\bullet$ be a graded free $R$-resolution of $M$ as in  Lemma \ref{TorFR}. Then,
$$
H^i_\mathfrak{m}(M)\simeq H_{d-i}(H^d_\mathfrak{m} (F_\bullet)),
$$
for all $i\geq0$ by Theorem \ref{CohModRes}, where $d:=n_1+\ldots+n_k$ is the number of variables of $R$. This shows that $\mathbb C_\mathfrak{m}^i(M)\subseteq \bigcup_{p\leq d-i}\mathbb T_{p}(M)+\mathbb C_\mathfrak{m}(R)$, hence $\mathbb C_\mathfrak{m} (M)\subseteq \mathbb T (M)+\mathbb C_\mathfrak{m} (R)$ and therefore $\mathbb C_\mathfrak{m}(M)^*\subseteq \mathbb T (M)+\mathbb C_\mathfrak{m}(R)$.

To show the inverse inclusion, notice that for all $\mu \not\in \mathbb C_\mathfrak{m}(M)^*$, $(\mu +\mathbb{N}^k )\cap \mathbb C_\mathfrak{m}(M)=\emptyset$. Write $\mathbf{X}$ for the tuple of all $d$ variables and consider the first quadrant double complex $\check\Cc_\mathbf{X}^\bullet(K_\bullet ( \mathbf{X};M))$. It gives rise to a spectral sequence with first terms $K_i(\mathbf{X}; H^j_\mathfrak{m} (M))$ that abuts to $\Tor_{i-j}^R(M,S)$. As shifts in $K_i(\mathbf{X};H^j_\mathfrak{m}(M))$ have all $p$-th coordinates at most $n_p$ for any $p=1,...,k$, in degree $\mu -a$ all terms are zero because 
$K_i(\mathbf{X};H^j_\mathfrak{m} (M))_{\mu -a}$ is a sum of copies of $H^j_\mathfrak{m}(M)$ sitting in degrees $\mu +\delta$ for $\delta\in \mathbb{N}^k$. It follows that $\mu -a\not\in \mathbb T (M)$,  as claimed.

We now turn to the general case. Notice that for $u\in \mathbb Z$, the submodule $M[u]:=\{ m\in M,\ \tdeg (m)\geq u\}$ is such that $M/M[u]=H^0_{\mathfrak m}(M/M[u])$. It follows that $H^i_{\mathfrak m}(M[u])_\mu$ coincides with $H^i_{\mathfrak m}(M)_\mu $ for every $i$ and $\mu$ of total degree at least $u$. On the other hand, $K_\bullet ( \mathbf{X};M)$ and $K_\bullet ( \mathbf{X};M[u])$ coincide in total degree at least $u+d$.

Hence $\mathbb C_\mathfrak{m}(M)$ coincides with $\mathbb C_\mathfrak{m}(M[u])$ and $\hat{\mathbb T} (M)$ coincides with $\hat{\mathbb T} (M[u])$ in total degree at least  $u$.

Since the sets $E_u:=\{ t\in \mathbb Z^k,\ t_1+\cdots +t_k<u\}$  are stable under the addition of $-\mathbb N^k$, $(A\cup E_u)^*=A^*\cup E_u$ for any set $A\subseteq \mathbb Z^k$. 

It follows that
$$
\begin{array}{rlr}
\hat{\mathbb T} (M)^*&=\cap_{u\in \mathbb Z} (\hat{\mathbb T} (M)^*\cup E_{u})&(\hbox{since } \cap_{u\in \mathbb Z} E_{u} =\emptyset )\\
&=\cap_{u\in \mathbb Z} (\hat{\mathbb T} (M[u])^*\cup E_{u})&(\hbox{as } \hat{\mathbb T} (M[u])=\hat{\mathbb T} (M)\hbox{ off }E_u)\\
&=\cap_{u\in \mathbb Z} (\mathbb C_\mathfrak{m}(M[u])^*\cup E_{u})&(\hbox{by the first part})\\
&=\cap_{u\in \mathbb Z} (\mathbb C_\mathfrak{m}(M)^*\cup E_{u})&(\hbox{as }\mathbb C_\mathfrak{m} (M[u])=\mathbb C_\mathfrak{m} (M)\hbox{ off }E_{u})\\
&=\mathbb C_\mathfrak{m}(M)^*&(\hbox{since } \cap_{u\in \mathbb Z} E_{u} =\emptyset ).\\
\end{array}
$$
\end{proof}

\begin{lemma}\label{cohtornu}
Let $M$ be a graded $R$-module and $i_1,\ldots,i_p$ be distinct elements in $\{1,\ldots,k\}$. Set $T:=S[\mathbf{X}_{i_1},\ldots,\mathbf{X}_{i_p}]$ and let $\nu\in\oplus_{q\not\in \{ i_1,\ldots ,i_p\}}\mathbb{Z}e_q$. Then, for any $j\geq0$,
\begin{itemize}
    \item [i)] $H^j_{B_{i_1}+\dots+B_{i_p}}(M)_{*,\nu}\simeq H^j_{B_{i_1}+\dots+B_{i_p}}(M_{*,\nu})$ as graded $T$-modules;

\item [ii)] $\Tor^R_j(M,R/B_{i_1}+\dots+B_{i_p})_{*,\nu}\simeq\Tor^T_j(M_{*,\nu},S)$ as graded $T$-modules.
\end{itemize}
\end{lemma}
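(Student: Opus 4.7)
The plan is to realize both isomorphisms through a single complex whose differentials involve only the variables $\mathbf{f}:=\mathbf{X}_{i_1}\cup\cdots\cup\mathbf{X}_{i_p}$, and then exploit the fact that each such differential preserves the $\mathbb{Z}^{k-p}$-grading in the complementary coordinates. Concretely, every variable in $\mathbf{f}$ has degree $e_{i_j}$ for some $j\in\{1,\ldots,p\}$, so multiplying by such a variable shifts only the coordinates in $\oplus_{j=1}^p\mathbb Z e_{i_j}$. This shows that any $R$-module homomorphism built out of polynomial expressions in the variables of $\mathbf{f}$ respects the decomposition of a graded $R$-module along the $\nu'$-grading, where $\nu'$ ranges over $\oplus_{q\notin\{i_1,\ldots,i_p\}}\mathbb Z e_q$.

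For part (i), I would compute $H^j_{B_{i_1}+\cdots+B_{i_p}}(M)$ as the cohomology of the \v{C}ech complex $\check\Cc^\bullet_\mathbf{f}(M)$. Since localization at a variable in $\mathbf{f}$ does not touch the coordinates indexed by $q\notin\{i_1,\ldots,i_p\}$, one has a natural decomposition of complexes of $T$-modules
\[
\check\Cc^\bullet_\mathbf{f}(M)=\bigoplus_{\nu'}\check\Cc^\bullet_\mathbf{f}(M)_{*,\nu'},
\]
and the $\nu$-piece is canonically identified with $\check\Cc^\bullet_\mathbf{f}(M_{*,\nu})$, viewed as a \v{C}ech complex over $T$. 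Cohomology commutes with direct sums, so restricting to the $\nu$-component commutes with taking $H^j$, which yields the desired graded $T$-module isomorphism.

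For part (ii), I would use the Koszul complex $K_\bullet(\mathbf{f};R)$, which is a graded free $R$-resolution of $R/(B_{i_1}+\cdots+B_{i_p})$ because $\mathbf{f}$ is a regular sequence in $R$. Hence
\[
\Tor^R_j(M,R/(B_{i_1}+\cdots+B_{i_p}))\simeq H_j\bigl(K_\bullet(\mathbf{f};M)\bigr),
\]
and the same decomposition argument as in (i) realizes the $\nu$-piece of $K_\bullet(\mathbf{f};M)$ as the Koszul complex $K_\bullet(\mathbf{f};M_{*,\nu})$ over $T$. Viewed over $T$, however, $K_\bullet(\mathbf{f};T)$ is itself a graded free $T$-resolution of $S=T/(B_{i_1}+\cdots+B_{i_p})$, so $H_j(K_\bullet(\mathbf{f};M_{*,\nu}))$ computes $\Tor^T_j(M_{*,\nu},S)$ as a graded $T$-module.

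There is no genuine obstacle here; the only care required is bookkeeping, namely to verify that the splitting along the $\nu$-grading is a decomposition of complexes of $T$-modules (immediate since $T$-multiplication preserves $\nu$), and that the $T$-structure on $M_{*,\nu}$ inherited from the inclusion $T\hookrightarrow R$ coincides with the $T$-structure used in the right-hand sides of (i) and (ii). Both identifications are tautological once the grading argument is in place.
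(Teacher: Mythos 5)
Your proof is correct and rests on exactly the same observation as the paper's (one-sentence) proof: the variables in $\mathbf{X}_{i_1},\ldots,\mathbf{X}_{i_p}$ have degree zero in the coordinates indexed by $q\notin\{i_1,\ldots,i_p\}$, so the \v{C}ech and Koszul complexes built from them split as direct sums along the $\nu$-grading. Your write-up simply makes explicit the bookkeeping that the paper leaves implicit.
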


\begin{proof}
The variables in $\mathbf{X}_{i_1},\ldots,\mathbf{X}_{i_p}$ have degree zero in $\oplus_{q\not\in \{ i_1,\ldots ,i_p\}}\mathbb{Z}e_q$.
\end{proof}

A more general version of our first main result is directly derived from this lemma.

\begin{theorem}\label{C=T}
Let $M$ be a  graded $R$-module and $i_1,\dots ,i_p$ be distinct elements in $\{ 1,\dots ,k\}$. Then 
$$\hat{\mathbb T}^{i_1,\dots,i_p}(M)-\sum_{j=1}^pe_{i_j}\mathbb N = \mathbb C_{B_{i_1}+\dots+B_{i_p}}(M)-\sum_{j=1}^pe_{i_j}\mathbb N.$$
As a consequence, 
$$
\mathbb C_{B_{i_1}+\cdots +B_{i_p}}(M)^*=\hat{\mathbb T}^{i_1,\ldots ,i_p}(M)^*.
$$
\end{theorem}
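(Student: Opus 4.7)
The plan is to reduce the claim to Theorem \ref{basicincl} applied to the smaller polynomial ring $T:=S[\mathbf{X}_{i_1},\ldots,\mathbf{X}_{i_p}]$, in which $B_{i_1}+\cdots+B_{i_p}$ plays the role of the maximal irrelevant ideal $\mathfrak{m}$. The tool that makes this possible is Lemma \ref{cohtornu}, which identifies both local cohomology and Tor modules slice by slice in the directions complementary to $\{e_{i_1},\ldots,e_{i_p}\}$. The naive Tor-shift $a_{i_1,\ldots,i_p}=-\sum_{j=1}^p n_{i_j}e_{i_j}$ is precisely the shift vector $a$ attached to the ring $T$ with its standard $\mathbb Z^p$-grading on $\bigoplus_{j=1}^p\mathbb Z e_{i_j}$, so the statement of Theorem \ref{basicincl} transports cleanly to this setting.

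I would first fix $\nu\in\bigoplus_{q\notin\{i_1,\ldots,i_p\}}\mathbb Z e_q$ and view $M_{*,\nu}$ as a $\mathbb Z^p$-graded $T$-module. By Lemma \ref{cohtornu}\,(i), every nonzero degree of $H^j_{B_{i_1}+\cdots+B_{i_p}}(M)$ of the form $\gamma+\nu$ (with $\gamma\in\bigoplus_j\mathbb Z e_{i_j}$) corresponds bijectively to a nonzero degree $\gamma$ of $H^j_{\mathfrak{m}_T}(M_{*,\nu})$, so that
\[
\mathbb C_{B_{i_1}+\cdots+B_{i_p}}(M)=\bigcup_\nu\bigl(\mathbb C_{\mathfrak{m}_T}(M_{*,\nu})+\nu\bigr).
\]
Lemma \ref{cohtornu}\,(ii) yields the analogous decomposition
\[
\hat{\mathbb T}^{i_1,\ldots,i_p}(M)=\bigcup_\nu\bigl(\bigl(\Supp_{\mathbb Z^p}\Tor^T_*(M_{*,\nu},S)+a_{i_1,\ldots,i_p}\bigr)+\nu\bigr).
\]
Applying Theorem \ref{basicincl} over the ring $T$ to each $M_{*,\nu}$ gives equality of the two $\nu$-slices after subtracting $\mathbb N_T:=\sum_{j=1}^p e_{i_j}\mathbb N$. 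Since $\mathbb N_T$ lies entirely in the $\mathbb Z^p$-directions and each $\nu$ lies in the complementary directions, adding $\nu$ commutes with subtracting $\mathbb N_T$, so taking the union over $\nu$ yields the first claimed equality
\[
\hat{\mathbb T}^{i_1,\ldots,i_p}(M)-\sum_{j=1}^p e_{i_j}\mathbb N=\mathbb C_{B_{i_1}+\cdots+B_{i_p}}(M)-\sum_{j=1}^p e_{i_j}\mathbb N.
\]

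For the consequence, I would simply observe that $\mathbb N^k=\sum_{j=1}^p e_{i_j}\mathbb N+\sum_{q\notin\{i_1,\ldots,i_p\}}e_q\mathbb N$, so subtracting the second summand from both sides of the displayed equality produces $\hat{\mathbb T}^{i_1,\ldots,i_p}(M)^*=\mathbb C_{B_{i_1}+\cdots+B_{i_p}}(M)^*$. The only real obstacle is the bookkeeping to confirm that the slice decomposition and the subtraction of $\mathbb N_T$ interact as expected; this is straightforward once one is careful that $a_{i_1,\ldots,i_p}$ sits in the $\mathbb Z^p$-slice and that Theorem \ref{basicincl} is being invoked with the ring and grading of $T$ rather than of $R$.
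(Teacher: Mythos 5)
Your proof is correct and takes essentially the same approach the paper indicates: the paper's proof of Theorem \ref{C=T} simply says ``According to Lemma \ref{cohtornu}, the proof follows along the same lines as in the one of Theorem \ref{basicincl},'' and your slice-by-$\nu$ reduction via Lemma \ref{cohtornu} combined with applying Theorem \ref{basicincl} over $T$ is exactly the intended argument, made explicit. Your observation that one must first subtract only $\sum_j e_{i_j}\mathbb N$ (the star internal to $T$) and then complete to the full $\mathbb N^k$-star is precisely the bookkeeping the paper leaves implicit.
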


\begin{proof}
According to Lemma \ref{cohtornu}, the proof follows along the same lines as in the one of Theorem \ref{basicincl}.
\end{proof}

In order to compare support with respect to several cohomologies, the following result is useful. Let $\mathcal{E}_j^{i_1,\ldots ,i_p}\subset \mathbb{N}^k$ be the set of shifts in $K_j(\mathbf{X}_{i_1},\dots ,\mathbf{X}_{i_p};R)$. 

\begin{proposition}\label{TorinCB}
Let $M$ be a graded $R$-module and $i_1,\dots ,i_p$ be distinct elements in $\{ 1,\dots ,k\}$.  For any finitely generated graded ideal $I\subseteq \sqrt{B_{i_1}+\cdots +B_{i_p}+\ann_R (M)}$,
$$\hat{\mathbb T}^{i_1,\dots,i_p}_j(M)\subseteq\bigcup_{r\leq(\sum_{l=1}^pn_{i_l})-j}\mathbb C^r_I(M)+\mathcal{E}^{i_1,...,i_p}_{j+r}+a_{i_1,\ldots ,i_p}$$ for all $j\geq0$.
\end{proposition}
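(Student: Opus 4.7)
The plan is to run a standard double complex argument on
$$
C^{\bullet,\bullet}:=\check\Cc^\bullet_{\mathbf{f}}\bigl(K_\bullet (\mathbf X_{i_1},\ldots ,\mathbf X_{i_p};M)\bigr),
$$
where $\mathbf f$ is a finite generating set of $I$. Since $K_\bullet (\mathbf X_{i_1},\ldots ,\mathbf X_{i_p};R)$ is a free $R$-resolution of $R/J$ with $J:=B_{i_1}+\cdots +B_{i_p}$ (the variables form a regular sequence), the horizontal homology of $K_\bullet (\mathbf X_{i_1},\ldots ,\mathbf X_{i_p};M)$ is $\Tor_u^R(M,R/J)$, and $\check\Cc^\bullet_{\mathbf{f}}$ is exact on $R$-modules, so the two spectral sequences attached to this double complex both abut to a common limit.

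First I would extract the first spectral sequence by taking horizontal homology first. This yields at $E_2$ the modules $H^v_I(\Tor_u^R(M,R/J))$. Because $\Tor_u^R(M,R/J)$ is annihilated by both $\ann_R(M)$ and $J$, hence by $J+\ann_R(M)$, and since $I\subseteq \sqrt{J+\ann_R(M)}$, a power of $I$ annihilates $\Tor_u^R(M,R/J)$. Therefore $H^0_I$ acts as the identity and $H^v_I=0$ for $v>0$, so this spectral sequence collapses and identifies the total homology (in the appropriate degree) with $\Tor_j^R(M,R/J)$.

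Next I would analyze the other spectral sequence: take vertical cohomology first. Since $K_u(\mathbf X_{i_1},\ldots ,\mathbf X_{i_p};M)$ is a finite direct sum of copies of $M(-\delta)$ with $\delta\in \mathcal E^{i_1,\ldots ,i_p}_u$, and since $\check\Cc^\bullet_{\mathbf f}$ commutes with direct sums and shifts, we get
$$
E_1^{u,v}=\bigoplus_{\delta\in \mathcal E^{i_1,\ldots ,i_p}_u}H^v_I(M)(-\delta).
$$
Taking horizontal homology then gives $E_2^{u,v}=\Tor_u^R\bigl(R/J,H^v_I(M)\bigr)$, computed via the Koszul complex on $\mathbf X_{i_1},\ldots ,\mathbf X_{i_p}$, so as a support estimate
$$
\Supp_{\mathbb Z^k}\bigl(E_2^{u,v}\bigr)\subseteq \mathbb C_I^v(M)+\mathcal E^{i_1,\ldots ,i_p}_u.
$$
Since both spectral sequences converge to the same filtered module, the subquotients of $\Tor_j^R(M,R/J)$ are bounded by $E_\infty^{u,u-j}\subseteq E_2^{u,u-j}$ for $u-j=v\geq 0$; and $E_2^{u,v}=0$ unless $u\leq \sum_l n_{i_l}$ (the Koszul complex has length $\sum_l n_{i_l}$). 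Setting $r:=v=u-j$, this gives
$$
\mathbb T^{i_1,\ldots ,i_p}_j(M)\subseteq \bigcup_{r\leq (\sum_{l}n_{i_l})-j}\mathbb C^r_I(M)+\mathcal E^{i_1,\ldots ,i_p}_{j+r},
$$
and translating by $a_{i_1,\ldots ,i_p}$ produces the stated inclusion.

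The main obstacle is not technical but organizational: one must be careful that $\ann_R(M)$ does annihilate the $\Tor$'s (this is standard, via the fact that $\ann_R(M)$ kills a free resolution of $M$), and that the radical hypothesis on $I$ is indeed enough to force the degeneration of the first spectral sequence. Once those points are set, the second spectral sequence and the Koszul estimate give the bound essentially for free.
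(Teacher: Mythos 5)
Your proof is correct and follows essentially the same route as the paper: the double complex $\check\Cc^\bullet_{\mathbf f}\bigl(K_\bullet(\mathbf X_{i_1},\ldots,\mathbf X_{i_p};M)\bigr)$, with one spectral sequence collapsing to $\Tor_\bullet^R(M,R/(B_{i_1}+\cdots+B_{i_p}))$ via the annihilator/radical hypothesis and the other having $E_1$ page the Koszul complex on $H^v_I(M)$, yielding the support estimate. You spell out a couple of points the paper leaves implicit (that $\Tor_u^R(M,R/J)$ is killed by $J+\ann_R(M)$, and you pass to $E_2$ via $\Tor_u^R(R/J,H^v_I(M))$ rather than using the $E_1$ Koszul terms directly as the paper does), but the support bound is identical.
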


\begin{proof}
Let $\check\Cc^\bullet$ be the \v{C}ech complex of $R$ with respect to a finite generating set of $I$ and consider the double complex $\check\Cc^\bullet(K_\bullet( \mathbf{X}_{i_1},\dots ,\mathbf{X}_{i_p};M))$. It gives rise to a spectral sequence with first terms $K_i(\mathbf{X}_{i_1},\dots,\mathbf{X}_{i_p}; H^j_I (M))$ that abuts to $\Tor_{i-j}^R(M,R/B_{i_1}+\dots+B_{i_p})$ since $I\subseteq \sqrt{B_{i_1}+\cdots +B_{i_s}+\ann_R (M)}$.

Now, $${\mathbb T}^{i_1,\dots,i_p}_j(M)\subseteq\bigcup_{l-r=j}\mathbb C^r_I(M)+\mathcal{E}^{i_1,...,i_p}_l=\bigcup_{r\leq(\sum_{l=1}^pn_{i_l})-j}\mathbb C^r_I(M)+\mathcal{E}^{i_1,...,i_p}_{j+r}$$ whence the result.
\end{proof}

\begin{corollary}\label{incCoh}
Let $M$ be a graded $R$-module  and $i_1,\dots ,i_p$ be distinct elements in $\{ 1,\dots ,k\}$.  
For any finitely generated graded ideal $I\subseteq B_{i_1}+\dots +B_{i_p}$,
$$
\mathbb C_{B_{i_1}+\dots +B_{i_p}}(M)\subseteq \mathbb C_I (M)-\sum_{j=1}^pe_{i_j}\mathbb{N}.$$
In particular, if $p\geq1$,
$$
\mathbb C_{B_{i_1}+\dots +B_{i_p}}(M)^*\subseteq \mathbb C_{B_{i_1}+\cdots +B_{i_{p-1}}}(M)^*.
$$
\end{corollary}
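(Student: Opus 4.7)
The plan is to deduce this corollary directly from Theorem \ref{C=T} and Proposition \ref{TorinCB}, without any new spectral sequence computation, and then to extract the ``in particular'' statement by a starring argument.

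First, using Theorem \ref{C=T}, I would replace $\mathbb C_{B_{i_1}+\dots+B_{i_p}}(M)-\sum_{j=1}^p e_{i_j}\mathbb N$ by $\hat{\mathbb T}^{i_1,\dots,i_p}(M)-\sum_{j=1}^p e_{i_j}\mathbb N$, so that the desired containment becomes
\[
\hat{\mathbb T}^{i_1,\dots,i_p}(M)\subseteq \mathbb C_I(M)-\sum_{l=1}^p e_{i_l}\mathbb N.
\]
Then apply Proposition \ref{TorinCB} with the given ideal $I$ (which, being contained in $B_{i_1}+\dots+B_{i_p}$, certainly satisfies the required radical hypothesis) to get, for each $j$,
\[
\hat{\mathbb T}^{i_1,\dots,i_p}_j(M)\subseteq\bigcup_{r\leq (\sum_l n_{i_l})-j}\mathbb C^r_I(M)+\mathcal{E}^{i_1,\dots,i_p}_{j+r}+a_{i_1,\dots,i_p}.
\]

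The next step, and the only thing that needs a direct check, is to observe that $\mathcal{E}^{i_1,\dots,i_p}_{j+r}+a_{i_1,\dots,i_p}\subseteq -\sum_{l=1}^p e_{i_l}\mathbb N$. Indeed, a shift in the Koszul complex $K_{j+r}(\mathbf X_{i_1},\dots,\mathbf X_{i_p};R)$ has the form $\sum_l m_{i_l}e_{i_l}$ with $0\leq m_{i_l}\leq n_{i_l}$, and adding $a_{i_1,\dots,i_p}=-\sum_l n_{i_l}e_{i_l}$ gives $\sum_l (m_{i_l}-n_{i_l})e_{i_l}$, whose coordinates are nonpositive and supported on $\{e_{i_1},\dots,e_{i_p}\}$. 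This yields the required inclusion and hence, by Theorem \ref{C=T} and the fact that $-\sum e_{i_l}\mathbb N$ absorbs any further subtraction of $\sum e_{i_l}\mathbb N$, the first displayed inclusion of the corollary.

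For the ``in particular'' part, I would specialize to $I=B_{i_1}+\cdots+B_{i_{p-1}}$, which is manifestly contained in $B_{i_1}+\cdots+B_{i_p}$, to get
\[
\mathbb C_{B_{i_1}+\dots+B_{i_p}}(M)\subseteq \mathbb C_{B_{i_1}+\cdots+B_{i_{p-1}}}(M)-\sum_{l=1}^p e_{i_l}\mathbb N.
\]
Applying the star operation $E\mapsto E-\mathbb N^k$ and using that $-\sum_l e_{i_l}\mathbb N\subseteq -\mathbb N^k$ collapses the extra shift on the right-hand side, giving $\mathbb C_{B_{i_1}+\cdots+B_{i_{p-1}}}(M)^*$; by minimality of the star operation from Lemma \ref{stabcomp}, the left-hand side's star is contained in it. There is no serious obstacle here: the proof is essentially bookkeeping. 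The one point worth care is the shape of the Koszul shifts, to ensure that the correction term $a_{i_1,\dots,i_p}$ indeed cancels into $-\sum_l e_{i_l}\mathbb N$ rather than leaking into coordinates outside $\{i_1,\dots,i_p\}$.
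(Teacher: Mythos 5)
Your proposal is correct and follows essentially the same route as the paper: reduce via Theorem \ref{C=T} to a statement about $\hat{\mathbb T}^{i_1,\dots,i_p}(M)$, invoke Proposition \ref{TorinCB} with the given $I$, and absorb the Koszul shifts into $-\sum_l e_{i_l}\mathbb N$. The one point you flag for care — that $\mathcal E^{i_1,\dots,i_p}_{j+r}+a_{i_1,\dots,i_p}$ lands in $-\sum_l e_{i_l}\mathbb N$ with no leakage into other coordinates — is exactly the bookkeeping the paper leaves implicit when it says ``by taking union over $j$''.
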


\begin{proof}
By taking union over $j$ in Proposition \ref{TorinCB} we obtain $$\hat{\mathbb T}^{i_1,\dots,i_p}(M)\subseteq \mathbb C_I(M)-\sum_{j=1}^pe_{i_j}\mathbb{N}.$$ The result follows from Theorem \ref{C=T}.
\end{proof}

Notice that this corollary implies that $ \mathbb C_{B_{i}}(M)^*\subseteq \mathbb C_B(M)^*$ for any $i$ and
$$
\mathbb C_{B_{i_1}+\dots +B_{i_p}}(M)^*\subseteq \mathbb C_{B_{i_1}}(M)^*\cap \cdots \cap \mathbb C_{B_{i_p}}(M)^*.
$$

As a consequence, we deduce the following description of  $\mathbb C_{B}(M)^*$. Notice that it is needed to stabilize the complement by adding $-\mathbb{N}^k$, see Example \ref{supportk=2ex}.

\begin{theorem}\label{cohB}
Let $M$ be a  graded $R$-module. Then, 
$$
\bigcup_i \mathbb C_{B_{i}}(M)^*= \bigcup_{i_1,\dots ,i_p} \mathbb C_{B_{i_1}+\cdots +B_{i_p}}(M)^*=\mathbb C_{B}(M)^*.
$$
\end{theorem}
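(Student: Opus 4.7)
The plan is to split the three-way equality into two pieces: left $=$ middle and middle $=$ right. The left-to-middle equivalence will be essentially free, the inclusion right $\subseteq$ middle will fall out of the spectral sequence already constructed, and the substantive content will be the inclusion middle $\subseteq$ right.

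First I would handle left $=$ middle. The inclusion left $\subseteq$ middle is obtained by taking $p=1$ in the middle union. For the reverse inclusion middle $\subseteq$ left, I would iterate the second assertion of Corollary \ref{incCoh}, which gives
$$
\mathbb C_{B_{i_1}+\cdots+B_{i_p}}(M)^* \subseteq \mathbb C_{B_{i_1}+\cdots+B_{i_{p-1}}}(M)^* \subseteq \cdots \subseteq \mathbb C_{B_{i_1}}(M)^* \subseteq \bigcup_i \mathbb C_{B_i}(M)^*.
$$

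Next, for right $\subseteq$ middle, I would apply the spectral sequence of Corollary \ref{marcss}. If $\mu$ lies outside $\bigcup_{i_1,\ldots,i_p} \mathbb C_{B_{i_1}+\cdots+B_{i_p}}(M)$, then every $E_2$ term vanishes at the degree $\mu$, so $H^q_B(M)_\mu = 0$ for all $q$. This places $\mathbb C_B(M)$ inside the union, and since $E \subseteq F$ implies $E^* \subseteq F^*$, taking stars yields $\mathbb C_B(M)^* \subseteq \bigcup_{i_1,\ldots,i_p} \mathbb C_{B_{i_1}+\cdots+B_{i_p}}(M)^*$.

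The main step is middle $\subseteq$ right. By Theorem \ref{C=T}, I would rewrite each piece of the middle union as $\hat{\mathbb T}^{i_1,\ldots,i_p}(M)^*$, reducing the problem to the supports of Tor modules. Since $B = B_1\cdots B_k \subseteq B_{i_1}+\cdots+B_{i_p}$, Proposition \ref{TorinCB} applies with $I=B$ and gives
$$
\hat{\mathbb T}^{i_1,\ldots,i_p}_j(M) \subseteq \bigcup_{r \leq (\sum_l n_{i_l}) - j} \mathbb C^r_B(M) + \mathcal{E}^{i_1,\ldots,i_p}_{j+r} + a_{i_1,\ldots,i_p}.
$$
I would then verify the key combinatorial inclusion $\mathcal{E}^{i_1,\ldots,i_p}_l + a_{i_1,\ldots,i_p} \subseteq -\mathbb N^k$: a Koszul shift has $i_q$-th coordinate in $[0, n_{i_q}]$ and zero elsewhere, while $a_{i_1,\ldots,i_p} = -\sum_q n_{i_q} e_{i_q}$ subtracts exactly $n_{i_q}$ in the corresponding coordinate. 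Hence $\hat{\mathbb T}^{i_1,\ldots,i_p}(M) \subseteq \mathbb C_B(M) - \mathbb N^k = \mathbb C_B(M)^*$, and since $\mathbb C_B(M)^*$ is already stable, taking stars preserves the inclusion and finishes middle $\subseteq$ right.

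The main obstacle is the combinatorial verification that the Koszul shifts land in $-\mathbb N^k$ after adding $a_{i_1,\ldots,i_p}$; this is what allows the Koszul-shift factor to be absorbed cleanly into the star operation on $\mathbb C_B(M)$. Everything else is bookkeeping with the preceding results (Corollaries \ref{marcss} and \ref{incCoh}, and Theorem \ref{C=T}), together with the standard compatibilities of $*$ with unions and inclusions.
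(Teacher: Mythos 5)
Your proof is correct and follows essentially the same route as the paper's: the paper derives both left $=$ middle and middle $\subseteq$ right directly from Corollary \ref{incCoh} (the latter by taking $I=B$ in its first statement), and right $\subseteq$ middle from Corollary \ref{marcss}, which matches your decomposition. The only difference is that for middle $\subseteq$ right you re-derive Corollary \ref{incCoh} from scratch by passing through Theorem \ref{C=T} and Proposition \ref{TorinCB} and checking the Koszul-shift combinatorics, instead of simply invoking the corollary with $I=B$; the underlying argument is identical.
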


\begin{proof}
First by Corollary \ref{incCoh},
$$
\bigcup_i \mathbb C_{B_{i}}(M)^*= \bigcup_{i_1,\dots ,i_p} \mathbb C_{B_{i_1}+\cdots +B_{i_p}}(M)^*\subseteq \mathbb C_{B}(M)^*.
$$
On the other hand, $\mathbb C_{B}(M)\subseteq \bigcup_{i_1,\dots ,i_p} \mathbb C_{B_{i_1}+\cdots +B_{i_p}}(M)$ by Corollary \ref{marcss} or \cite[Lemma 2.1]{BC}.
\end{proof}

\begin{remark}\label{example}
In the example of $R:=S[a,b,c,x,y,z]$  with $S$ a field ($k=2$, $n_1=n_2=3$) and $M:=R/(ax+by+cz)$, the sets $\mathbb C_{B_{i}}(M)$ and $\mathbb C_{\mathfrak m}(M)$ are stable under the addition of $-\mathbb N^2$ and independent of $S$, while $\mathbb C_{B}(M)$ is not stable under the addition of $-\mathbb N^2$ and depends upon the characteristic of $S$ (two distinct characteristics give rise to two distinct supports). It shows that the determination of $\mathbb C_{B}(M)$ could reserve some delicate points (for this example, we could only fully determine $\mathbb C_{B}(M)$ in characteristic zero or two).
\end{remark}

By Theorem \ref{C=T}, the first equality in Theorem \ref{cohB} follows from an inclusion of support of Tor modules that could be proved directly, as we now show.

\begin{proposition}
Let $M$ be a graded $R$-module and $i_1,\dots ,i_p,j_1,\dots ,j_q$ be distinct elements in $\{ 1,\ldots ,k\}$, then
$$
{\mathbb T}^{i_1,\dots ,i_p,j_1,\dots ,j_q}_j (M)\subseteq \bigcup_{l\leq j}{\mathbb T}^{i_1,\dots ,i_{p}}_{j-l} (M)+\mathcal{E}_l^{j_1,\dots ,j_q}
$$ for all $j\geq0$.
\end{proposition}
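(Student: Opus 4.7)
I would follow the same double-complex strategy already used in the proof of Proposition \ref{TorinCB}, with a second Koszul complex playing the role of the Čech complex there. The starting point is that the full tuple $\mathbf{X}_{i_1},\dots,\mathbf{X}_{i_p},\mathbf{X}_{j_1},\dots,\mathbf{X}_{j_q}$ is a regular sequence in $R$, so the Koszul complex on it is a graded free $R$-resolution of $R/(B_{i_1}+\dots+B_{i_p}+B_{j_1}+\dots+B_{j_q})$ and factors as a tensor product $K^{(I)}_\bullet\otimes_R K^{(J)}_\bullet$ of the Koszul complexes on the two subtuples. In particular, $K^{(I)}_\bullet:=K_\bullet(\mathbf{X}_{i_1},\dots,\mathbf{X}_{i_p};R)$ is a graded free $R$-resolution of $R/(B_{i_1}+\dots+B_{i_p})$, while the term of $K^{(J)}_\bullet:=K_\bullet(\mathbf{X}_{j_1},\dots,\mathbf{X}_{j_q};R)$ in homological degree $\beta$ is the free $R$-module $\bigoplus_{d\in\mathcal{E}_\beta^{j_1,\dots,j_q}} R(-d)$.

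First I would form the double complex $D_{\alpha,\beta}:=K^{(I)}_\alpha\otimes_R K^{(J)}_\beta\otimes_R M$. Its total complex is the Koszul complex $K_\bullet(\mathbf{X}_{i_1},\dots,\mathbf{X}_{j_q};M)$, so its homology is $\Tor_\bullet^R(M,R/(B_{i_1}+\dots+B_{j_q}))$; hence both spectral sequences associated with $D$ converge to this Tor module. Running the spectral sequence that takes $\alpha$-homology first, and using that each $K^{(J)}_\beta$ is free (hence flat) over $R$ so that tensoring with it commutes with homology, one arrives at
$$
H_\alpha(D_{\bullet,\beta})\simeq \Tor_\alpha^R(M,R/(B_{i_1}+\dots+B_{i_p}))\otimes_R K^{(J)}_\beta,
$$
where the Tor identification uses the resolution property of $K^{(I)}_\bullet$. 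Since $K^{(J)}_\beta$ is a direct sum of shifted copies of $R$ indexed by $\mathcal{E}_\beta^{j_1,\dots,j_q}$, Lemma \ref{shiftlemma} identifies the support of the right-hand side with $\mathbb T^{i_1,\dots,i_p}_\alpha(M)+\mathcal E_\beta^{j_1,\dots,j_q}$.

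To conclude, each subsequent page is a subquotient of its predecessor, so the same bound holds on $E^\infty$; the filtration on the abutment has graded pieces given by $E^\infty$, so the support of $\Tor_j^R(M,R/(B_{i_1}+\dots+B_{j_q}))$ is contained in $\bigcup_{\alpha+\beta=j}\bigl(\mathbb T^{i_1,\dots,i_p}_\alpha(M)+\mathcal E_\beta^{j_1,\dots,j_q}\bigr)$. Reindexing by $l:=\beta$ then yields the stated inclusion. I do not expect a genuine obstacle here: the argument is a direct transcription of the scheme used in Proposition \ref{TorinCB}, and the only points requiring care are the tensor factorization of the full Koszul complex and the $\mathbb Z^k$-graded bookkeeping, in particular the identification of the shifts of $K^{(J)}_\beta$ with $\mathcal E_\beta^{j_1,\dots,j_q}$.
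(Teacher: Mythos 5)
Your argument is exactly the one in the paper: both form the double complex obtained by tensoring the Koszul complexes on the two subtuples with $M$, take homology in the "$i$-direction" first (using flatness of the Koszul terms to get $K_\bullet(\mathbf{X}_{j_1},\dots,\mathbf{X}_{j_q};\Tor_\bullet^R(M,R/(B_{i_1}+\dots+B_{i_p})))$ on the first page), and read off the support bound from the abutment. The bookkeeping identifying the shifts of $K^{(J)}_\beta$ with $\mathcal{E}_\beta^{j_1,\dots,j_q}$ is correct, so this is a faithful match.
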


\begin{proof}
The double complex $K_\bullet (\mathbf{X}_{j_1},\dots ,\mathbf{X}_{j_q};K_\bullet (\mathbf{X}_{i_1},\dots ,\mathbf{X}_{i_p};M))
$ gives rise to a spectral sequence with first terms  
$$
K_\bullet (\mathbf{X}_{j_1},\dots ,\mathbf{X}_{j_q};\Tor_\bullet^{R}(M,R/B_{i_1}+\ldots+B_{i_p}))
$$
that abuts to a filtration of $\Tor_\bullet ^{R}(M,R/B_{i_1}+\ldots+B_{i_p}+B_{j_1}+\ldots+B_{j_q})$.
\end{proof}

\begin{corollary}\label{hatT}
Let $M$ be a graded $R$-module and $i_1,\dots ,i_p,j_1,\dots ,j_q$ be distinct elements in $\{ 1,\ldots ,k\}$, then
$$
\hat{\mathbb T}^{i_1,\dots ,i_p,j_1,\dots ,j_q} (M)^*\subseteq \hat{\mathbb T}^{i_1,\dots ,i_p} (M)^*.
$$
\end{corollary}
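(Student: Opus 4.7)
The plan is to bootstrap directly from the preceding Proposition by carefully tracking the Koszul shifts. Taking the union over $j\geq 0$ of the conclusion of that Proposition immediately yields
$$
\mathbb T^{i_1,\dots ,i_p,j_1,\dots ,j_q} (M)\subseteq \mathbb T^{i_1,\dots ,i_{p}} (M)+\bigcup_{l\geq 0}\mathcal{E}_l^{j_1,\dots ,j_q}.
$$

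Next, I would use the explicit description of the Koszul shift sets: any element of $\mathcal{E}_l^{j_1,\dots ,j_q}$ is the multidegree of an $l$-fold wedge of distinct variables drawn from $\mathbf X_{j_1}\cup\cdots\cup\mathbf X_{j_q}$, hence has the form $\sum_{s=1}^q c_s e_{j_s}$ with $0\leq c_s\leq n_{j_s}$. In particular each such shift is bounded coordinate-wise by $\sum_{s=1}^q n_{j_s}e_{j_s}=-a_{j_1,\dots ,j_q}$, which means
$$
\bigcup_{l\geq 0}\mathcal{E}_l^{j_1,\dots ,j_q}\subseteq -a_{j_1,\dots ,j_q}-\mathbb N^k.
$$

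Combining the two displays and then adding $a_{i_1,\dots ,i_p,j_1,\dots ,j_q}=a_{i_1,\dots ,i_p}+a_{j_1,\dots ,j_q}$ to both sides, the $a_{j_1,\dots ,j_q}$ contributions cancel, producing
$$
\hat{\mathbb T}^{i_1,\dots ,i_p,j_1,\dots ,j_q} (M)\subseteq \hat{\mathbb T}^{i_1,\dots ,i_{p}} (M)-\mathbb N^k=\hat{\mathbb T}^{i_1,\dots ,i_{p}} (M)^*.
$$
Since the right hand side is already stable under subtraction of $\mathbb N^k$ (so $(E^*)^*=E^*$), applying the star operation to both sides preserves the inclusion and delivers the stated conclusion.

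There is no real obstacle here; the argument is essentially a bookkeeping exercise. The only point worth checking carefully is the precise bound on the Koszul shifts, and the fact that the shift $a_{j_1,\dots ,j_q}$ built into the definition of $\hat{\mathbb T}^{i_1,\dots ,i_p,j_1,\dots ,j_q}$ exactly compensates the Koszul upper bound $-a_{j_1,\dots ,j_q}$ modulo $-\mathbb N^k$; this is precisely what makes the hat normalization the right one for propagating stability through adding further index sets.
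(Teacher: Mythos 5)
Your argument is correct and is exactly the direct deduction from the preceding Proposition that the paper leaves implicit (the Corollary is stated there without a written proof). The key points — that every shift in $\mathcal{E}_l^{j_1,\dots,j_q}$ is $\sum_{s}c_s e_{j_s}$ with $0\leq c_s\leq n_{j_s}$, hence lies in $-a_{j_1,\dots,j_q}-\mathbb N^k$, and that the hat normalization absorbs $a_{j_1,\dots,j_q}$ exactly — are the intended bookkeeping.
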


\section{Some consequences on the truncation of modules}

We first recall two results in the classical case where $R=S[X_1,\ldots,X_n]$ is a standard $\mathbb{Z}$-graded polynomial ring and $\mathfrak{m}=(X_1,\ldots ,X_n)$.

\begin{proposition}\label{regbound}
Let $C_\bullet$ be a graded complex of $R$-modules such that  $H^p_{\mathfrak m}(H_i(C_\bullet))=0$ for any $i\not= 0$ and $p>i$. Then,
$$\reg(H_0(C_\bullet))\leq\max_i \{\reg(C_i)-i\}.$$
\end{proposition}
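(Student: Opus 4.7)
The plan is to form the double complex $\check{\mathcal C}^\bullet_{\mathfrak m}(C_\bullet)$, with horizontal (\v{C}ech) and vertical ($C_\bullet$) differentials, and to exploit its two spectral sequences, both converging to the cohomology $\mathbb H^\bullet$ of the total complex. Setting $\rho := \max_i \{\reg(C_i) - i\}$, the conclusion is equivalent to showing $H^p_{\mathfrak m}(H_0(C_\bullet))_j = 0$ whenever $p + j > \rho$.

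Taking horizontal cohomology first yields a spectral sequence whose $E_2^{p, -i}$ term $H_i(H^p_{\mathfrak m}(C_\bullet))$ is a subquotient of $H^p_{\mathfrak m}(C_i)$ and contributes to $\mathbb H^{p-i}$. Since $H^p_{\mathfrak m}(C_i)_j = 0$ for $p + j > \reg(C_i) \leq \rho + i$, this term vanishes in degree $j$ once $(p - i) + j > \rho$; hence $\mathbb H^n_j = 0$ for $n + j > \rho$.

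Taking vertical cohomology first, flatness of localization yields $E_2^{p, -i} = H^p_{\mathfrak m}(H_i(C_\bullet))$. The crucial step is to verify that the hypothesis kills every higher differential with source or target on the row $i = 0$: an outgoing differential from $(p, 0)$ targets a subquotient of $H^{p+r}_{\mathfrak m}(H_{r-1}(C_\bullet))$, which vanishes by hypothesis (since $r - 1 \geq 1$ and $p + r > r - 1$); an incoming differential originates from a subquotient of $H^{p-r}_{\mathfrak m}(H_{1-r}(C_\bullet))$, which vanishes either trivially (when $p < r$, so $p-r<0$) or, for $p \geq r$, by the hypothesis applied to $i = 1 - r \leq -1$ with $p - r > 1 - r$. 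Consequently $H^p_{\mathfrak m}(H_0(C_\bullet))$ survives intact to $E_\infty^{p, 0}$, appearing as a graded piece of the filtration on $\mathbb H^p$, and so vanishes in every degree where $\mathbb H^p$ does.

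The main technical point is the vanishing of all higher $E_r$ differentials at the row $i = 0$; once this bookkeeping is checked (taking care that the hypothesis handles both the outgoing side, where $i = r-1 \geq 1$, and the incoming side, where $i = 1-r \leq -1$), combining the two spectral sequences immediately yields $H^p_{\mathfrak m}(H_0(C_\bullet))_j = 0$ whenever $p + j > \rho$, and hence $\reg(H_0(C_\bullet)) \leq \rho$.
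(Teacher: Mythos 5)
Your proof is correct and follows essentially the same route as the paper, which cites the two spectral sequences of the double complex $\check{\Cc}^\bullet_{\mathbf X}(C_\bullet)$ as in \cite[Lemma~2.2]{CFN}; you have simply filled in the bookkeeping that the paper leaves to the reference. In particular your careful check that the hypothesis kills both the outgoing differentials (landing on rows $i=r-1\geq 1$) and the incoming ones (coming from rows $i=1-r\leq -1$, or trivially zero when $p<r$) is exactly the point the paper's parenthetical remark about not requiring $C_i=0$ for $i<0$ is alluding to.
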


\begin{proof}
It follows along the same lines as for instance in the proof of \cite[Lemma 2.2]{CFN} by consideration of the two spectral sequences from $\check\Cc^\bullet_{\mathbf X} (C_\bullet)$ (the hypothesis  $C_i=0$ for $i<0$ there is not used except to restrict the $\max$ to positive values of $i$). 
\end{proof}

\begin{lemma}\label{standardsupport}
Let $M$ be a graded $R$ module and $t$ an integer.Then,
\begin{itemize}
    \item [(i)] $\mathbb C^0_\mathfrak{m}(M_{t+\mathbb N})=\mathbb C^0_\mathfrak{m}(M)\cap(t+\mathbb N)$;
    \item [(ii)] $\mathbb C^1_\mathfrak{m}(M_{t+\mathbb N})=\mathbb C_\mathfrak{m}^1(M)\cup\{\Supp_{\mathbb Z}(M/H^0_\mathfrak{m}(M))\cap(t-1-\mathbb N)\}$;
    \item [(iii)] $\mathbb C^j_\mathfrak{m}(M_{t+\mathbb N})=\mathbb C^j_\mathfrak{m}(M)$ for all $j\geq2$.
\end{itemize}
\end{lemma}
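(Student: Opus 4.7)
The plan is to exploit the short exact sequence of graded $R$-modules
$$0 \to M_{t+\mathbb N} \to M \to N \to 0, \qquad N := M/M_{t+\mathbb N},$$
together with the observation that $N$ is $\mathfrak m$-torsion. Indeed, $N$ is concentrated in degrees $<t$, so every homogeneous element of $N$ of degree $d$ is killed by $\mathfrak m^{t-d}$; hence $H^0_\mathfrak m(N) = N$ and $H^j_\mathfrak m(N) = 0$ for all $j \geq 1$.

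Feeding this into the long exact sequence of local cohomology yields (iii) immediately: for $j\geq 2$ both $H^{j-1}_\mathfrak m(N)$ and $H^j_\mathfrak m(N)$ vanish, so the connecting map sandwich forces $H^j_\mathfrak m(M_{t+\mathbb N}) \simeq H^j_\mathfrak m(M)$. For (i), the same long exact sequence begins with $0 \to H^0_\mathfrak m(M_{t+\mathbb N}) \to H^0_\mathfrak m(M) \to N$; the second map is the restriction of $M\twoheadrightarrow N$, so its kernel is $H^0_\mathfrak m(M) \cap M_{t+\mathbb N}$, which in each degree $d$ equals $H^0_\mathfrak m(M)_d$ when $d\geq t$ and is zero otherwise. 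Taking supports gives (i).

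For (ii) I would extract from the long exact sequence the short exact sequence
$$0 \to M/\bigl(H^0_\mathfrak m(M) + M_{t+\mathbb N}\bigr) \to H^1_\mathfrak m(M_{t+\mathbb N}) \to H^1_\mathfrak m(M) \to 0.$$
Setting $\overline M := M/H^0_\mathfrak m(M)$, the leftmost term is naturally identified with $\overline M/\overline M_{t+\mathbb N}$, i.e.\ with $\overline M$ truncated above degree $t-1$; its support is therefore $\Supp_\mathbb Z(M/H^0_\mathfrak m(M)) \cap (t-1-\mathbb N)$. Since in any short exact sequence the support of the middle term is the union of the supports of the outer terms, (ii) follows.

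The proof is essentially a bookkeeping exercise once the $\mathfrak m$-torsion of $N$ is noted; the only step requiring care is the identification in (ii) of the cokernel of $H^0_\mathfrak m(M)\to N$ with the below-$t$ truncation of $M/H^0_\mathfrak m(M)$, which amounts to verifying that $(H^0_\mathfrak m(M)+M_{t+\mathbb N})/M_{t+\mathbb N}$ is exactly the image of $H^0_\mathfrak m(M)$ inside $N$.
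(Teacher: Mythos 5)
Your proof is correct and follows exactly the same route as the paper's: both start from the short exact sequence $0\to M_{t+\mathbb N}\to M\to M/M_{t+\mathbb N}\to 0$, note that $M/M_{t+\mathbb N}$ is $\mathfrak m$-torsion so its higher local cohomology vanishes, and read off (i)--(iii) from the resulting long exact sequence; your extra care in identifying the cokernel of $H^0_\mathfrak m(M)\to M/M_{t+\mathbb N}$ with the below-$t$ truncation of $M/H^0_\mathfrak m(M)$ is precisely what the paper leaves to the reader.
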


\begin{proof}
It directly derives from the exact sequence
$$
\xymatrix@=1em{
0\ar[r] & H^0_\mathfrak{m}(M_{t+\mathbb N})\ar[r] & H^0_\mathfrak{m}(M)\ar[r] & M/M_{t+\mathbb N}\ar[r] & H^1_\mathfrak{m}(M_{t+\mathbb N})\ar[r] & H^1_\mathfrak{m}(M)\ar[r] & 0}$$ and the isomorphisms $H^j_\mathfrak{m}(M_{t+\mathbb N})\simeq H^j_\mathfrak{m}(M)$ for all $j\geq2$, that are in turn provided by the short exact sequence 
$$
\xymatrix@=1em{
0\ar[r] & M_{t+\mathbb N}\ar[r]&M\ar[r]&M/M_{t+\mathbb N}\ar[r]&0\\}
$$
and the vanishing of $H^i_\mathfrak{m}(M/M_{t+\mathbb N})$ for $i>0$.
\end{proof}

We now return to our general setting of a standard multigraded ring $R$.

\begin{definition} 
Let $\mathbb E_i:=\{ (q_1,\ldots ,q_k)\in \mathbb Z^k\ \vert\ q_i\geq 0\}$ and
$$
\mathbb E_{i_1,\ldots ,i_p} :=\mathbb E_{i_1}\cap\cdots\cap E_{i_p}.
$$
\end{definition}
Notice that $\mathbb E_{1,\ldots ,k}=\mathbb N^k$. With these notations,

\begin{proposition}\label{cohtrunc1}
Let $M$ be a graded $R$-module, $i\in \{ 1,\ldots ,k\}$ and $t\in \mathbb Z^k$. Then 
\begin{itemize}
   \item [(i)] For $\mu\in t+\mathbb N^k$,
    $$
     H^j_{B_i}(M_{ t+\mathbb N^k})_\mu = H^j_{B_i}(M)_\mu ,\quad \forall j.
    $$

    \item [(ii)]  For $\mu\not\in t+\mathbb N^k$,
\begin{itemize}    
    \item [(1)] If $\mu\not\in t+\mathbb E_{1,\ldots,\widehat{i,}\ldots,k}$, $    H^j_{B_i}(M_{t+\mathbb N^k})_\mu =0$ for all $j$;
    \item [(2)] If $\mu\in t+\mathbb E_{1,\ldots,\widehat{i,}\ldots,k}$, $H^0_{B_i}(M_{t+\mathbb N^k})_\mu =0$, the sequence
$$
\xymatrix@=1em{
0\ar[r] &  (M/H^0_{B_i}(M))_\mu\ar[r] & H^1_{B_i}(M_{t+\mathbb N})_\mu \ar[r] & H^1_{B_i}(M)_\mu \ar[r] & 0}$$ is exact    and $H^j_{B_i}(M_{t+\mathbb N^k})=H^j_{B_i}(M)$ for all $j\geq2$.
\end{itemize}
\end{itemize}
\end{proposition}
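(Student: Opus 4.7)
The plan is to reduce the multigraded statement to the single-variable-set case of Lemma \ref{standardsupport} via the slicing of Lemma \ref{cohtornu}(i). I would write $\mu=\mu_i e_i+\mu'$ with $\mu'\in\bigoplus_{q\neq i}\mathbb{Z}e_q$, and let $t'\in\bigoplus_{q\neq i}\mathbb{Z}e_q$ be the analogous projection of $t$. A direct inspection of degrees shows that the $T$-module $(M_{t+\mathbb{N}^k})_{*,\mu'}$, with $T:=S[\mathbf{X}_i]$, vanishes if $\mu'\not\geq t'$ coordinatewise and equals $(M_{*,\mu'})_{t_i+\mathbb{N}}$ otherwise. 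Applying Lemma \ref{cohtornu}(i), I identify $H^j_{B_i}(M_{t+\mathbb{N}^k})_\mu$ with the degree-$\mu_i$ component of $H^j_{B_i}((M_{t+\mathbb{N}^k})_{*,\mu'})$ over $T$, and similarly $H^j_{B_i}(M)_\mu=H^j_{B_i}(M_{*,\mu'})_{\mu_i}$. Case (ii)(1) is then immediate, since the hypothesis $\mu\notin t+\mathbb{E}_{1,\ldots,\widehat{i},\ldots,k}$ is exactly $\mu'\not\geq t'$, so the slice is zero and so are all its local cohomologies.

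For cases (i) and (ii)(2), where $\mu'\geq t'$, I would exploit the short exact sequence of $T$-modules
$$
0\to (M_{*,\mu'})_{t_i+\mathbb{N}}\to M_{*,\mu'}\to Q\to 0,\qquad Q:=M_{*,\mu'}/(M_{*,\mu'})_{t_i+\mathbb{N}},
$$
in which $Q$ is supported in single degrees $<t_i$ and hence is annihilated by a power of $B_i$. Consequently $H^0_{B_i}(Q)=Q$ and $H^j_{B_i}(Q)=0$ for $j\geq 1$, and the induced long exact sequence collapses to the five-term-plus-higher-isomorphisms sequence driving the proof of Lemma \ref{standardsupport}. Reading this off in degree $\mu_i$: if $\mu_i\geq t_i$ then $Q_{\mu_i}=0$, yielding the isomorphisms $H^j_{B_i}(M_{t+\mathbb{N}^k})_\mu\simeq H^j_{B_i}(M)_\mu$ of (i); if $\mu_i<t_i$, then $((M_{*,\mu'})_{t_i+\mathbb{N}})_{\mu_i}=0$ and $Q_{\mu_i}=M_\mu$, which converts the five-term sequence in degree $\mu_i$ into the claimed four-term sequence after identifying the cokernel of $H^0_{B_i}(M)_\mu\to M_\mu$ as $(M/H^0_{B_i}(M))_\mu$, while the higher isomorphisms for $j\geq 2$ follow automatically.

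The argument is essentially bookkeeping on degrees: once the slicing of Lemma \ref{cohtornu}(i) reduces everything to local cohomology over $T=S[\mathbf{X}_i]$, the single-variable-set content of Lemma \ref{standardsupport} does all the heavy lifting. The only point requiring care is verifying the compatibility $(M_{t+\mathbb{N}^k})_{*,\mu'}=(M_{*,\mu'})_{t_i+\mathbb{N}}$ for $\mu'\geq t'$ (and its vanishing for $\mu'\not\geq t'$), which is immediate from the definition of the $\mathbb{N}^k$-truncation; I expect no genuine homological obstacle beyond tracking the four degree regimes.
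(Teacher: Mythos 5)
Your argument is correct and is essentially the paper's proof, which it states in one line by citing exactly the two ingredients you use: the slicing isomorphism of Lemma \ref{cohtornu}(i), which reduces the computation to local cohomology over $T=S[\mathbf X_i]$ of the sliced truncation $(M_{*,\mu'})_{t_i+\mathbb N}$ (with the dichotomy $\mu'\geq t'$ versus $\mu'\not\geq t'$ giving cases (ii)(1) versus (i), (ii)(2)), and the long exact sequence underlying Lemma \ref{standardsupport}. One small wording caveat: $Q$ is $B_i$-torsion element-wise rather than annihilated by a single fixed power of $B_i$ (it need not have a minimal degree), but elementwise torsion is exactly what is needed for $Q_{X_{i,l}}=0$ and hence $H^0_{B_i}(Q)=Q$ and $H^j_{B_i}(Q)=0$ for $j\geq 1$, so the argument goes through unchanged.
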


\begin{proof}
According to Lemma \ref{cohtornu} i), this follows from Lemma \ref{standardsupport} applied to the ring $T=S[\mathbf X_i]$.
\end{proof}

Set $\mathbf 1 :=(1,\ldots ,1)=\sum_{i=1}^k e_i\in \mathbb N^k$.
\begin{corollary}\label{cohtrunc2}
Let $M$ be a graded  $R$-module and $t\in \mathbb Z^k$. Then,
$$
\mathbb C_{\mathfrak m}(M_{ t+\mathbb N^k})^*\subseteq \bigcap_i (\mathbb C_{B_{i}}(M)\cap (t+\mathbb N^k))^* \cup (t-\mathbf 1 -\mathbb E_i) .
$$
\end{corollary}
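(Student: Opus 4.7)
The idea is to reduce the statement to Proposition~\ref{cohtrunc1} via the single-index containment of $\mathbb C_\mathfrak m$ inside each $\mathbb C_{B_i}$. First I would apply Corollary~\ref{incCoh} with the ideal $I = B_i\subseteq\mathfrak m=B_1+\cdots+B_k$ to the module $N := M_{t+\mathbb N^k}$; this yields $\mathbb C_\mathfrak m(N)\subseteq \mathbb C_{B_i}(N)-\mathbb N^k$ for each $i$, and after applying $(-)^*$ and intersecting over $i$ one obtains
\begin{equation*}
\mathbb C_\mathfrak m(M_{t+\mathbb N^k})^*\subseteq \bigcap_i \mathbb C_{B_i}(M_{t+\mathbb N^k})^*.
\end{equation*}
It therefore suffices to prove, for each single index $i$, the inclusion
\begin{equation*}
\mathbb C_{B_i}(M_{t+\mathbb N^k})^*\subseteq (\mathbb C_{B_i}(M)\cap(t+\mathbb N^k))^* \cup (t-\mathbf 1-\mathbb E_i).
\end{equation*}

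The second step is to invoke Proposition~\ref{cohtrunc1} to locate $\mathbb C_{B_i}(M_{t+\mathbb N^k})$ degree by degree. The three cases of that proposition say that $H^j_{B_i}(M_{t+\mathbb N^k})_\mu$ vanishes whenever $\mu\notin t+\mathbb E_{1,\ldots,\widehat{i},\ldots,k}$, coincides with $H^j_{B_i}(M)_\mu$ for $\mu\in t+\mathbb N^k$, and otherwise forces $\mu$ to lie in $(t+\mathbb E_{1,\ldots,\widehat{i},\ldots,k})\setminus(t+\mathbb N^k)$. Consequently
\begin{equation*}
\mathbb C_{B_i}(M_{t+\mathbb N^k})\subseteq \bigl(\mathbb C_{B_i}(M)\cap(t+\mathbb N^k)\bigr)\cup \bigl((t+\mathbb E_{1,\ldots,\widehat{i},\ldots,k})\setminus(t+\mathbb N^k)\bigr).
\end{equation*}
Every $\mu$ in the second piece satisfies $\mu_i<t_i$, so this piece is contained in the half-space $\{\mu\in\mathbb Z^k:\mu_i\le t_i-1\}$, which is precisely $t-\mathbf 1-\mathbb E_i$.

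To finish, I would observe that $t-\mathbf 1-\mathbb E_i$ is stable under addition of $-\mathbb N^k$: if $\mu_i\le t_i-1$ then $(\mu-\delta)_i\le t_i-1$ for every $\delta\in\mathbb N^k$, so $(t-\mathbf 1-\mathbb E_i)^*=t-\mathbf 1-\mathbb E_i$. Since by Lemma~\ref{stabcomp} the star operation is just $E\mapsto E-\mathbb N^k$, and this obviously commutes with finite unions, applying $(-)^*$ to the previous inclusion yields the required bound on $\mathbb C_{B_i}(M_{t+\mathbb N^k})^*$; intersecting over $i$ closes the argument via the reduction of the first paragraph.

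The only delicate point in this plan is the combinatorial identification of the ``exceptional'' degrees in Proposition~\ref{cohtrunc1}(ii)(2) with the translated half-space $t-\mathbf 1-\mathbb E_i$, ensuring that this set is already $(-)^*$-stable; once this is in place, the rest is routine bookkeeping chaining the two inclusions and taking the intersection over $i$.
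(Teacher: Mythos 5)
Your proposal is correct and essentially coincides with the paper's own argument: both reduce to the single-index estimate $\mathbb C_\mathfrak m(M_{t+\mathbb N^k})^*\subseteq\bigcap_i\mathbb C_{B_i}(M_{t+\mathbb N^k})^*$ via Corollary~\ref{incCoh}, then feed in the case-analysis of Proposition~\ref{cohtrunc1}, and finally handle the exceptional degrees through the identification $(\mathbb E_{1,\ldots,\widehat{i},\ldots,k}\setminus\mathbb N^k)^* = -\mathbf 1-\mathbb E_i$. The only cosmetic difference is that you argue by containment into the half-space $t-\mathbf 1-\mathbb E_i$ and then note it is already $(-)^*$-stable, whereas the paper directly records the equality $(\mathbb E_{1,\ldots,\widehat{i},\ldots,k}\setminus\mathbb N^k)^*=-\mathbf 1-\mathbb E_i$; these are equivalent.
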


\begin{proof}
By Proposition \ref{cohtrunc1}, 
$$
\mathbb C_{B_i}(M_{ t+\mathbb N^k})\subseteq (\mathbb C_{B_i}(M)\cap (t+\mathbb N^k))\cup (t+(\mathbb E_{1,\ldots,\widehat{i,}\ldots,k}\setminus \mathbb N^k)).
$$
As $(\mathbb E_{1,\ldots,\widehat{i,}\ldots,k}\setminus \mathbb N^k )^* =-\mathbb E_{i}-\mathbf 1$, the conclusion follows from Corollary \ref{incCoh}.
\end{proof}

\begin{proposition}\label{tortrunc}
Let $M$ be a graded $R$-module 
$
\Delta :=[0,n_1-1]\times\cdots\times [0,n_k-1]\subseteq \mathbb N^k
$
and $t\in \mathbb Z^k$. Then
$$
\mathbb T (M_{ t+\mathbb N^k})\subseteq (-a+(\mathbb C_B(M)^*\cap (t+\mathbb N^k))) \cup (t+\Delta ).
$$
In particular, $\mathbb T (M_{ t+\mathbb N^k})\subseteq t+\Delta$ if $t\not\in \mathbb C_B(M)^*$.
\end{proposition}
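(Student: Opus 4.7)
The plan is to chain Theorem \ref{basicincl}, applied to the truncation $N := M_{t+\mathbb N^k}$, with Corollary \ref{cohtrunc2}, and then perform a case analysis on the coordinates of $\mu + a$ for each $\mu \in \mathbb T(N)$. The crucial preliminary observation is that $\mathbb T(N) \subseteq t + \mathbb N^k$: because $N$ is generated in degrees in $t + \mathbb N^k$ and the entries of syzygy matrices have coordinates in $\mathbb N^k$, one can choose a graded free resolution of $N$ whose shifts all lie in $t + \mathbb N^k$, and Lemma \ref{TorFR} then gives $\mathbb T_j(N) \subseteq t + \mathbb N^k$ for every $j$.

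Next I would invoke Theorem \ref{basicincl} for $N$, which yields $\mathbb T(N) \subseteq -a + \mathbb C_\mathfrak{m}(N)^*$, and then feed in Corollary \ref{cohtrunc2} to obtain, for every index $i$ and every $\mu \in \mathbb T(N)$,
\[
\mu + a \in (\mathbb C_{B_i}(M) \cap (t + \mathbb N^k))^* \quad \text{or} \quad (\mu + a)_i \leq t_i - 1.
\]
If the second alternative holds for every $i$, then $\mu_i \leq t_i + n_i - 1$, and combined with the lower bound $\mu_i \geq t_i$ from the preliminary step this forces $\mu \in t + \Delta$. Otherwise some index $j$ satisfies $(\mu + a)_j \geq t_j$; for that $j$ one has $\mu + a \in (\mathbb C_{B_j}(M) \cap (t + \mathbb N^k))^*$, so $\mu + a$ is dominated coordinatewise by a witness $\xi \in \mathbb C_{B_j}(M) \cap (t + \mathbb N^k)$. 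Corollary \ref{incCoh} gives $\mathbb C_{B_j}(M)^* \subseteq \mathbb C_B(M)^*$, and the $-\mathbb N^k$-stability of $\mathbb C_B(M)^*$ lets one conclude that $\mu + a$ lies in $\mathbb C_B(M)^* \cap (t + \mathbb N^k)$, whence $\mu \in -a + (\mathbb C_B(M)^* \cap (t + \mathbb N^k))$.

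The main obstacle will be the second case: promoting the lower-bound information from a single index $j$ to all coordinates, so that $\mu + a$ really lies in $t + \mathbb N^k$ and not merely in $\mathbb C_B(M)^*$. For the \emph{in particular} statement this obstacle evaporates: if $t \notin \mathbb C_B(M)^*$ then the hypothetical witness $\xi \in \mathbb C_{B_j}(M) \cap (t + \mathbb N^k) \subseteq \mathbb C_B(M)^* \cap (t + \mathbb N^k)$ would satisfy both $\xi \geq t$ and $\xi \in \mathbb C_B(M)^*$, forcing $t \in \mathbb C_B(M)^*$ by stability, a contradiction. Hence only the first case can occur, and we conclude $\mathbb T(N) \subseteq t + \Delta$.
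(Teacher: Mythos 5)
Your approach coincides with the paper's: deduce $\hat{\mathbb T}(N)\subseteq\mathbb C_{\mathfrak m}(N)^*$ for $N:=M_{t+\mathbb N^k}$ from Theorem \ref{basicincl}, feed this into Corollary \ref{cohtrunc2}, and intersect with the a priori bound $\mathbb T(N)\subseteq t+\mathbb N^k$ (the paper obtains this bound from $\Supp(N)\subseteq t+\mathbb N^k$ via the Koszul complex $K_\bullet(\mathbf X;N)$ rather than from a choice of resolution, but the effect is the same).

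The ``obstacle'' you flag in the second case is real, and it is not a defect of your proof so much as an imprecision in the displayed inclusion itself. What your argument (and the paper's) actually establishes is
$$
\mathbb T(M_{t+\mathbb N^k})+a\subseteq(\mathbb C_B(M)\cap(t+\mathbb N^k))^*\cup(t-\mathbf 1-\mathbb N^k),
$$
with the star applied to the intersection. Intersecting with $t+a+\mathbb N^k$ reduces the second set to $t+a+\Delta$, but it does not cut the first set down to $\mathbb C_B(M)^*\cap(t+\mathbb N^k)$: from $\mu+a\in(\mathbb C_B(M)\cap(t+\mathbb N^k))^*$ and $\mu\geq t$ one gets $\mu+a\geq t+a$, not $\mu+a\geq t$. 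Note that $(\mathbb C_B(M)\cap(t+\mathbb N^k))^*=(\mathbb C_B(M)^*\cap(t+\mathbb N^k))^*$, a set which is $-\mathbb N^k$-stable and in general strictly larger than $\mathbb C_B(M)^*\cap(t+\mathbb N^k)$; the two coincide only when both are empty. So the correct form of the conclusion has the star outside. The ``in particular'' assertion---the only part used in Proposition \ref{linrestrunc}---is unaffected, since $t\notin\mathbb C_B(M)^*$ forces $\mathbb C_B(M)\cap(t+\mathbb N^k)=\emptyset$ by Lemma \ref{stabcomp}, and you argue this correctly. In short: you reproduced the paper's proof and in the process spotted that the star in the statement should enclose the whole intersection.
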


\begin{proof}
First, $\hat{\mathbb T} (M_{ t+\mathbb N^k})\subseteq \hat{\mathbb T} (M_{ t+\mathbb N^k})^*=\mathbb C_{\mathfrak m}(M_{ t+\mathbb N^k})^*$ by Theorem \ref{basicincl}. By Theorem \ref{cohB}, $\mathbb C_B(M)^*=\cup_i \mathbb C_{B_i}(M)^*$, hence 
$$
\mathbb T (M_{ t+\mathbb N^k})+a\subseteq (\mathbb C_B(M)\cap  (t+\mathbb N^k))^* \cup (t-\mathbf 1 -\mathbb N^k).
$$

by Corollary \ref{cohtrunc2}. But $\mathbb T (M_{ t+\mathbb N^k})\subseteq \Supp (M_{ t+\mathbb N^k})\subseteq t+\mathbb N ^k$ and 
$$ (-a-\mathbf 1 -\mathbb N^k)\cap \mathbb N ^k =\Delta.$$

If $t\not\in \mathbb C_B(M)^*$, then $\mathbb C_B(M)^*\cap (t+\mathbb N^k) =\emptyset$ by Lemma \ref{stabcomp}.
 \end{proof}

Every $\mathbb Z^k$-graded module $M$ is also a $\mathbb Z$-graded module, for the total degree grading, and we set $\reg (M)$ for its Castelnuovo-Mumford regularity. For $\mu =(\mu_1,\ldots ,\mu_k )\in \mathbb Z^k$, write $\mu^+:=(\max \{ \mu_1 ,0\},\ldots ,\max \{ \mu_k ,0\} )$ and $\vert \mu \vert :=\mu_1 +\cdots +\mu_k$.

The next lemma follows from \cite[the proof of 1.7]{EES}; it provides a criterion for the truncation of a module $M_{t+\mathbb N^k}$ with $t\in \mathbb Z ^k$ to satisfy $\reg (M)=\vert t\vert$ (if the base ring $S$ is a field, usual terminology is that $M$ is generated in degree $\vert t\vert$ and has a linear resolution).

\begin{lemma}\label{slinrestrunc}
Let $M$ be a graded $R$-module such that $\tdeg (m)\geq u$ for some $u\in\mathbb Z$ and every $m\in M$. For $t\in \mathbb Z^k$, let
$$
\delta_i :=\sup_{\mu \in \mathbb T_i(M)} \{ \vert (t-\mu )^{+}\vert - \vert t - \mu \vert\}.
$$
Then $\reg (M_{t+\mathbb N^k})\leq\vert t\vert +\max_i \{ \delta_i -i\}$.

In particular, unless $M_{t+\mathbb N^k}=0$, if $\vert (t-\mu )^{+}\vert \leq \vert t - \mu \vert +i$ for any $i$ and $\mu \in \mathbb T_i (M)$,
$$
\reg (M_{t+\mathbb N^k})=\vert t\vert .
$$
\end{lemma}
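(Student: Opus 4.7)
The plan is to construct an explicit graded free $R$-resolution of $M_{t+\mathbb N^k}$ from a resolution of $M$, by truncating it and re-resolving the truncated pieces, and then to read off the total-degree regularity bound from the resulting shifts. By Lemma \ref{TorFR} (using the hypothesis $\tdeg(m)\geq u$), I choose a graded free $R$-resolution $F_\bullet$ of $M$ with shifts in $F_i$ contained in $\bigcup_{p\leq i}\mathbb T_p(M)$. The truncation functor $N\mapsto N_{t+\mathbb N^k}$ is exact on graded $R$-modules (for any surjection $N\twoheadrightarrow N''$, elements of multidegree $\geq t$ lift along components), so the truncated complex $(F_\bullet)_{t+\mathbb N^k}$ is still exact with $H_0=M_{t+\mathbb N^k}$, but its terms are no longer free.

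The core calculation is to resolve each truncated summand. For $R(-\mu)$ a summand of $F_i$, with $a=(t-\mu)^+$ taken componentwise, one checks
$$
R(-\mu)_{t+\mathbb N^k}\;\cong\;\Bigl(\prod_{j=1}^k B_j^{a_j}\Bigr)(-\mu).
$$
Each $B_j^{a_j}$ admits a linear free resolution of length $n_j-1$ over $S[\mathbf X_j]$ (Eagon--Northcott type, valid over any commutative base), whose step-$i_j$ term has shifts of multidegree $(a_j+i_j)e_j$. Taking the outer tensor product over $S$ produces a graded free $R$-resolution of $\prod_j B_j^{a_j}$ (the factors live in disjoint variable sets and are $S$-flat) whose step-$\ell$ term carries shifts of multidegree $a+\beta$ with $|\beta|=\ell$. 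After reshifting by $-\mu$, the step-$\ell$ term of the resolution of $R(-\mu)_{t+\mathbb N^k}$ carries shifts of total degree
$$
|\mu|+|(t-\mu)^+|+\ell=|t|+|(\mu-t)^+|+\ell,
$$
where the equality uses the elementary identity $|(t-\mu)^+|-|t-\mu|=|(\mu-t)^+|$, which is exactly what $\delta_i$ measures.

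Assembling the resolutions of the $(F_i)_{t+\mathbb N^k}$ via the horseshoe lemma (iteratively, or a Cartan--Eilenberg resolution of $(F_\bullet)_{t+\mathbb N^k}$), I obtain a graded free $R$-resolution $G_\bullet$ of $M_{t+\mathbb N^k}$ in which $G_n$ has shifts of total degree at most $|t|+|(\mu-t)^+|+(n-i)$, for some index $i\leq n$ and shift $\mu$ appearing in $F_i$ (so $\mu\in \bigcup_{p\leq i}\mathbb T_p(M)$). Since the regularity of a module is bounded by $\max\{\text{shift degree}-\text{step}\}$ over any graded free resolution, this gives
$$
\reg(M_{t+\mathbb N^k})\;\leq\; \max_{i\geq 0}\max_{p\leq i}(|t|+\delta_p-i)\;=\;|t|+\max_p(\delta_p-p),
$$
which establishes the main inequality. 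For the ``in particular'' statement, $\delta_i\leq i$ for all $i$ yields $\reg(M_{t+\mathbb N^k})\leq|t|$; conversely, under $\delta_0\leq 0$ every minimal generator $m$ of $M$ has multidegree $\mu\leq t$, and if every monomial of multidegree $t-\mu$ annihilated $m$ (for each generator) then all higher monomial products would as well, forcing $M_{t+\mathbb N^k}=0$. So whenever $M_{t+\mathbb N^k}\neq 0$, some such product is a nonzero minimal generator of $M_{t+\mathbb N^k}$ of total degree $|t|$, giving $\reg\geq|t|$.

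The main obstacle is the explicit resolution of $R(-\mu)_{t+\mathbb N^k}$: this requires both the universal Eagon--Northcott linear resolution of the powers $B_j^{a_j}$, and the verification that the outer tensor product of these over $S$ is a free $R$-resolution of the product ideal $\prod_j B_j^{a_j}$, which uses the disjointness of the variable sets $\mathbf X_j$ and the $S$-flatness of the factors.
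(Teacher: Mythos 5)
Your argument is correct and mirrors the paper's: both take a free resolution $F_\bullet$ with shifts controlled by Lemma \ref{TorFR}, form the exact truncated complex $(F_\bullet)_{t+\mathbb N^k}$, and exploit the Eisenbud--Erman--Schreyer fact that $R(-\mu)_{t+\mathbb N^k}$ admits a linear Eagon--Northcott resolution, so that $\reg(R(-\mu)_{t+\mathbb N^k})=\vert\mu\vert+\vert(t-\mu)^+\vert$. The only real difference is the last step: the paper applies Proposition \ref{regbound} directly to the non-free exact complex $(F_\bullet)_{t+\mathbb N^k}$, whereas you first assemble an explicit global free resolution of $M_{t+\mathbb N^k}$ via a horseshoe/Cartan--Eilenberg construction and then read off the shift bound --- the same spectral-sequence content in a more explicit form.
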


\begin{proof}
As noticed by Eisenbud, Erman and Schreyer, any truncation of $R$ has a linear resolution (given by the tensor product of some Eagon-Northcott complexes). As a consequence, considering $R$ with its  standard $\mathbb Z$-grading 
$$
\reg (R_{t+\mathbb N^k})=\indeg (R_{t+\mathbb N^k})=\vert t^{+}\vert .
$$ 
Hence $\reg (R(-\mu )_{t+\mathbb N^k})=\vert \mu \vert + \vert (t-\mu)^{+}\vert $.
As   $\tdeg (m)\geq u$ for every $m\in M$, $M$ admits a graded free $R$-resolution $F_\bullet$ with $F_i=\oplus_j R(-t_{i,j})$ satisfying $t_{i,j}\in \cup_{p\leq i}\mathbb T_p (M)$,  by Lemma \ref{TorFR}. The complex $(F_\bullet)_{t+\mathbb N^k}$ resolves $M_{t+\mathbb N^k}$ and for the standard grading of $R$, 
$$
\reg ((F_i)_{t+\mathbb N^k})-\vert t\vert =\sup_{j}\{ \vert t_{i,j}\vert +\vert (t- t_{i,j})^{+}\vert \} -\vert t\vert \leq \max_{p\leq i}\{ \delta_p\} .
$$
The conclusion follows by Proposition \ref{regbound}.
\end{proof}

\begin{remark}\label{remarktrunc}
To apply the above results in order to find $t$ such that $\reg (M_{t+\mathbb N^k})=\vert t\vert$, it is useful to have in mind that :

(i) $\delta_i=0$ if and only if $t\geq \mu$ (i.e. $t_j\geq \mu_j$, $\forall j$) for every $\mu\in \mathbb T_i$,

(ii) one may replace $M$ by $M'$ such that $M_{\mu+\mathbb N^k}=M'_{\mu+\mathbb N^k}$ and choose $t\in  \mu +\mathbb N^k$,

(iii) for any $\mu\in \mathbb Z^k$, $N:=M/M_{\mu +\mathbb N^k}$ satisfies $N=H^0_B(N)$ and therefore 
$$
H^i_B(M)_\nu =H^i_B(M_{\mu +\mathbb N^k})_\nu ,\quad \forall \nu\in \mu+\mathbb N^k.
$$
\end{remark}

\begin{proposition}\label{linrestrunc}
Let $M$ be a graded $R$-module and $\mu\not\in \mathbb C_B(M)^*$. 

Then for any $t\in \mu +(n_1-1,\ldots ,n_k-1)+\mathbb N^k$, $M_{t +\mathbb N^k}$ has a regularity $\vert t\vert$.
\end{proposition}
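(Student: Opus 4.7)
The approach is to apply Lemma \ref{slinrestrunc} to a suitable truncation of $M$, using Proposition \ref{tortrunc} to control the relevant Tor shifts. The hypothesis that $t$ is bounded below by $\mu+(n_1-1,\ldots ,n_k-1)$ will be exactly what is needed to kill the obstruction term in Lemma \ref{slinrestrunc}.

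The plan is, first, to replace $M$ by $M':=M_{\mu+\mathbb N^k}$. By Remark \ref{remarktrunc}(ii), this is harmless: since $t\in \mu+\mathbb N^k$, we have $M'_{t+\mathbb N^k}=M_{t+\mathbb N^k}$, so the regularities agree. Moreover, $M'$ satisfies $\tdeg(m')\geq |\mu|$ for every $m'\in M'$, so Lemma \ref{slinrestrunc} applies to $M'$.

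Next, I would invoke Proposition \ref{tortrunc} with $M$ at the truncation level $\mu$: since $\mu\notin \mathbb C_B(M)^*$, this gives
$$\mathbb T(M')\;=\;\mathbb T(M_{\mu+\mathbb N^k})\;\subseteq\; \mu+\Delta,$$
where $\Delta=[0,n_1-1]\times\cdots\times[0,n_k-1]$. Hence every $\xi\in \mathbb T_i(M')$ has the form $\xi=\mu+\delta'$ with $0\leq \delta'_p\leq n_p-1$ for each $p$.

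The key numerical step is then immediate: since $t_p\geq \mu_p+n_p-1\geq \mu_p+\delta'_p=\xi_p$ for every $p$, the vector $t-\xi$ lies in $\mathbb N^k$, so $(t-\xi)^+=t-\xi$ and therefore
$$|(t-\xi)^+|-|t-\xi|\;=\;0\;\leq\; i$$
for every $i$ and every $\xi\in \mathbb T_i(M')$. The ``in particular'' clause of Lemma \ref{slinrestrunc}, applied to $M'$, then yields $\reg(M'_{t+\mathbb N^k})=|t|$ whenever $M_{t+\mathbb N^k}\neq 0$ (the zero case being degenerate).

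No serious obstacle is anticipated: the proof is a direct chaining of Remark \ref{remarktrunc}(ii), Proposition \ref{tortrunc}, and Lemma \ref{slinrestrunc}. The only nontrivial conceptual ingredient is recognizing that the ``buffer'' $(n_1-1,\ldots ,n_k-1)$ between $\mu$ and $t$ is precisely what is needed to push the Tor shifts of the truncated module $M_{\mu+\mathbb N^k}$ (which Proposition \ref{tortrunc} confines to $\mu+\Delta$) componentwise below $t$, rendering the hypothesis of Lemma \ref{slinrestrunc} vacuous.
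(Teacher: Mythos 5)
Your proof is correct and follows the same route as the paper, whose own proof is literally the one-line instruction ``it follows from Proposition~\ref{tortrunc} and Lemma~\ref{slinrestrunc}, according to Remark~\ref{remarktrunc}.'' You have correctly identified that Remark~\ref{remarktrunc}(ii) justifies passing to $M'=M_{\mu+\mathbb N^k}$ (which is bounded below and has $M'_{t+\mathbb N^k}=M_{t+\mathbb N^k}$), that the ``in particular'' clause of Proposition~\ref{tortrunc} confines $\mathbb T(M')$ to $\mu+\Delta$, and that the buffer $(n_1-1,\ldots,n_k-1)$ then forces $t-\xi\in\mathbb N^k$ for every Tor shift $\xi$, so the hypothesis of the ``in particular'' clause of Lemma~\ref{slinrestrunc} holds trivially with $\delta_i=0$.
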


\begin{proof}
It follows from Proposition \ref{tortrunc} and Lemma \ref{slinrestrunc}, according to Remark \ref{remarktrunc}.
\end{proof}

For $M=R$, $\reg (R_{t+\mathbb N^k})=\vert t\vert$ if and only if $t\in \mathbb N^k$ and $\mathbb C_B(R)^*+(n_1-1,\ldots ,n_k-1)=\mathbb Z^k\setminus \mathbb N^k$.\\

\noindent{\bf Acknowledgements.} The second-named author was supported by a CAPES Doctoral Scholarship.

\end{document}